\numberwithin{equation}{section}
\DeclareMathOperator{\sgn}{sgn}
\definecolor{mypink1}{rgb}{0.858, 0.188, 0.478}
\definecolor{mypink2}{RGB}{219, 48, 122}
\definecolor{mypink3}{cmyk}{0, 0.7808, 0.4429, 0.1412}
\definecolor{mygray}{gray}{0.6}
\newlength{\leftstackrelawd}
\newlength{\leftstackrelbwd}
\def\leftstackrel#1#2{\settowidth{\leftstackrelawd}%
{${{}^{#1}}$}\settowidth{\leftstackrelbwd}{$#2$}%
\addtolength{\leftstackrelawd}{-\leftstackrelbwd}%
\leavevmode\ifthenelse{\lengthtest{\leftstackrelawd>0pt}}%
{\kern-.5\leftstackrelawd}{}\mathrel{\mathop{#2}\limits^{#1}}}
 \theoremstyle{plain}
\newtheorem{Th}{Theorem}[section]
\newtheorem{Cor}[Th]{Corollary}
\newtheorem{Prop}[Th]{Proposition}
\theoremstyle{definition}
\newtheorem{Def}[Th]{Definition}
\newtheorem{Rem}[Th]{Remark}
\newtheorem{?}[Th]{Problem}
\newcommand{\diam}{{\rm{diam}}}
\def\R{{\mathbb R}}
\def\N{{\mathbb N}}
\def\E{{\mathbb E}}
\def\R{{\mathbb R}}
\def\P{{\mathbb P}}
\def\A{{\mathbf A}}
\def\x{{\mathbf x}}
\def\b{{\mathbf b}}
\def\a{{\mathbf a}}
\def\h{{\mathbf{h}}}
\def\s{{\sigma}}
\def\e{{\varepsilon}}
\def\y{{\mathbf{y}}}
\def\g{{\mathbf{g}}}
\def\u{{\mathbf{u}}}
\def\w{{\mathbf{w}}}
\begin{document}
\title{Analysis of The Ratio of $\ell_1$ and $\ell_2$ Norms in Compressed Sensing}
\date{}

\author[1]{\small Yiming Xu}
\author[1,2]{\small Akil Narayan}
\author[3]{\small Hoang Tran}
\author[4,5]{\small Clayton G. Webster}
\affil[1]{\small Department of Mathematics, University of Utah, Salt Lake City, 84112\\
       \texttt{yxu@math.utah.edu}}
\affil[2]{\small Scientific Computing and Imaging Institute, University of Utah, Salt Lake City, 84112\\
       \texttt{akil@sci.utah.edu}}
\affil[3]{\small Computer Science and Mathematics Division, Oak Ridge National Laboratory, Oak Ridge, TN, 37831\\
 \texttt{tranha@ornl.gov}}
\affil[4]{\small Oden Institute for Computational Engineering \& Sciences, The University of Texas at Austin, Austin, TX, 78712\\
 \texttt{claytongwebster@utexas.edu}}
\affil[5]{\small Behavioral Reinforcement Learning Lab, Lirio LLC., Knoxville, TN, 37923}

\renewcommand\Authands{ and }
  
\maketitle

\begin{abstract}
 We study the ratio of $\ell_1$ and $\ell_2$ norms ($\ell_1/\ell_2$) as a sparsity-promoting objective in compressed sensing. 
 We first propose a novel criterion that guarantees that an $s$-sparse signal is the local minimizer of the $\ell_1/\ell_2$ objective; our criterion is interpretable and useful in practice. We also give the first uniform recovery condition using a geometric characterization of the null space of the measurement matrix, and show that this condition is easily satisfied for a class of random matrices. We also present analysis on the \textcolor{black}{robustness} of the procedure when noise pollutes data. Numerical experiments are provided that compare $\ell_1/\ell_2$ with some other popular non-convex methods in compressed sensing. Finally, we propose a novel initialization approach to accelerate the numerical optimization procedure. We call this initialization approach \emph{support selection}, and we demonstrate that it empirically improves the performance of existing $\ell_1/\ell_2$ algorithms.\end{abstract}

\begin{keyword}
Compressed sensing, High-dimensional geometry, Random matrices, Non-convex optimization
\end{keyword}   

\section{Introduction}

The goal of compressed sensing (CS) problem is to seek the sparsest solution of an underdetermined linear system:
\begin{align}
\min \|\x\|_0 \ \ \text{subject to}\ \A\x=\b,\label{cs}
\end{align}
where $\x\in\R^n, \b\in\R^m$ and $\A\in\R^{m\times n}$ with $m\ll n$. The quasinorm $\| \x\|_0$ measures the number of nonzero components in $\x$. In CS applications, one typically considers $\x$ as the frame/basis coordinates of an unknown signal, and it is typically assumed that the coordinate representation is \textit{sparse}, i.e., that $\| \x\|_0$ is ``small''. 
$\A$ is the measurement matrix that encodes linear measurements of the signal $\x$, and $\b$ contains the corresponding measured values. In the language of signal processing, (\ref{cs}) is equivalent to applying the sparsity decoder to reconstruct a signal from the undersampled measurement pair ($\A$, $\b$). A naive, empirical counting argument suggests that if $m \ll n$ measurements $\b$ of an unknown signal $\x$ are available, then we can perhaps compute the original signal coordinates $\x$, assuming $\x$ is approximately $m$-sparse. The optimization \eqref{cs} is the quantitative manifestation of this argument.

It was established in \cite{Donoho_2003} that under mild conditions, (\ref{cs}) has a unique minimizer. In the rest of the paper we assume that the minimizer is unique and denote it by $\x_0$. One of the central problems in compressed sensing is to design an effective algorithm to find $\x_0$: Directly solving (\ref{cs}) via combinatorial search is NP-hard \cite{Natarajan_1995}. A more practical approach, which was proposed in the seminal work \cite{Donoho_2006}, is to relax the sparsity measure $\| \cdot\|_0$ to the convex $\ell_1$ norm $\| \cdot\|_1$:
\begin{align}
\min \|\x\|_1 \ \ \text{subject to}\ \A\x=\b.\label{l1}
\end{align} 

The convexity of the problem \eqref{l1} ensures that efficient algorithms can be leveraged to compute solutions. Many pioneering works in compressed sensing have focused on understanding the equivalence between (\ref{cs}) and (\ref{l1}), see \cite{Donoho_2006,Candes_2006,Donoho_2001}. 
A major theoretical cornerstone of such equivalence results is the \emph{Null Space Property} (NSP), which was first introduced in \cite{Cohen_2008} and plays a crucial role in establishing sufficient and necessary conditions for the equivalence between (\ref{cs}) and (\ref{l1}). \textcolor{black}{A sufficient condition for such an equivalence is called an \textit{exact recovery condition}.} A closely related but stronger condition is the \emph{Restricted Isometry Property} (RIP), see \cite{Candes_2006}. The RIP is more flexible than the NSP for practical usage,
yet conditions given by both the NSP and RIP are hard to verify in the case when measurements (i.e., the matrix $\A$) are deterministically sampled. An alternative approach based on analyzing the \emph{mutual coherence} of $\A$ produces a practically computable but suboptimal condition, see \cite{Donoho_2003}. We will use a slightly more general definition of the NSP that was introduced in \cite{foucart2009sparsest}:

\begin{Def}[Null Space Property]
  Given $s \in \N$ and $c > 0$, a matrix $\A\in\R^{m\times n}$ satisfies the $(s,c)$-NSP in the quasi-norm $\ell_q$ ($0<q\leq 1$) if for all $\h\in\ker(\A)$ and $T \in [n]_s$, we have 
\begin{align}
\|\h_T\|_q^q<c\|\h_{T^\complement}\|_q^q. \label{NSP}
\end{align}
Here, $[n]_s$ is the collection of all subsets of $\{1,\ldots n\}$ with cardinality at most $s$,
\begin{align*}
  [n]_s &\coloneqq \left\{ T \subset [n]\;\; \big|\;\; \left|T\right| \leq s \right\}, & 
  [n] &\coloneqq \{1, \ldots, n\},
\end{align*}
$\h_T$ is the restriction of $\h$ to the index set $T$, and $T^\complement \coloneqq [n] \backslash T$.
\end{Def}

Nearly all exact recovery conditions based on the RIP are probabilistic in nature. \textcolor{black}{This means that such analysis typically is split into two major thrusts: (i) the first one establishes that (\ref{cs}) and (\ref{l1}) are equivalent for a class of sparse signals if $\A$ satisfies an RIP condition, and (ii) the second one proves that the RIP condition for a suitable random matrix $\A$ is achievable with high probability. }Such random arguments appear to be necessary in practice for the RIP analysis in order to mitigate pathological measurement configurations. 

Under proper randomness assumptions, an alternative approach that circumvents the RIP also yields fruitful results in the study of (\ref{l1}), see \cite{Zhang_2013,Vershynin_2015}. This approach is more reliant on a geometric characterization of the nullspace of the measurements, and therefore could be potentially adapted to analyzing non-convex objectives with similar geometric interpretations. We take this approach for analysis in this paper.

Although (\ref{l1}) has attracted a lot of interest in the past decades, the community realized that $\ell_1$ minimization is not as robust for computing sparsity-promoting solutions compared to other objective functions, in particular compared to other non-convex objectives. This motivates the study of non-convex relaxation methods (use non-convex objectives to approximate $\| \cdot\|_0$), which are believed to be more sparsity-aware. Many non-convex objective functions, such as $\ell_q$ ($0<q<1$) \cite{Gribonval_2007,Chartrand_2007,foucart2009sparsest}), reweighted $\ell_1$ \cite{Candes_2008}, CoSaMP \cite{Needell_2010}, IHT \cite{blumensath2009iterative}, $\ell_1-\ell_2$ \cite{Yin_2014,Yin_2015}, and $\ell_1/\ell_2$ \cite{Hoyer,Hurley_2008,Yin_2014,Rahimi_2019}, are empirically shown to outperform $\ell_1$ in certain contexts. However, relatively few such approaches have been successfully analyzed for theoretical recovery. In fact, obtaining exact recovery conditions and robustness analysis for general non-convex optimization problems is difficult, unless the objective possesses certain exploitable structure, see \cite{Lv_2009,Tran_2019}. 

We aim to investigate exact recovery conditions as well as the robustness for the objective $\ell_1/\ell_2$ in this paper. We are interested in providing conditions under which \eqref{cs} is equivalent to the following problem:
\begin{align}
\min \frac{\|\x\|_1}{\|\x\|_2} \ \ \text{subject to}\ \A\x=\b.\label{l1/l2}
\end{align}
To our knowledge, $\ell_1/\ell_2$ does not belong to any particular class of non-convex functions that has a systematic theory behind it. This is mainly because $\ell_1/\ell_2$ is neither convex nor concave, and is not even globally continuous. However, there are a few observations that make this non-convex objective worth investigating. First, in \cite{Petrosyan_2019} it was shown numerically that the $\ell_1/\ell_2$ outperforms $\ell_1$ by a notable margin in jointly sparse recovery problems (in the sense that many fewer measurements are required to achieve the same recovery rate); particularly, $\ell_1/\ell_2$ admits a high-dimensional generalization called \emph{orthogonal factor} and the corresponding minimization problem can be effectively solved using modern methods of manifold optimization. Understanding $\ell_1/\ell_2$ in one dimension would offer a baseline for its higher-dimensional counterparts. Secondly, in the matrix completion problem \cite{Cand_s_2012}, one desires a matrix with minimal rank under the component constraints. Note that the rank of a matrix is the $\ell_0$ measure of its singular value vector. A natural relaxation of rank to a more regular objectives include the so-called numerical intrinsic rank, which is defined by the ratio $\ell_1/\ell_\infty$ of the singular value vector, and the numerical/stable rank, which is defined by the ratio $\ell_2/\ell_\infty$ of the singular value vector. This suggests that the ratio between different norms might be a useful function to measure sparsity (complexity) of an object, and therefore leads us to study the objective $\ell_1/\ell_2$ in compressed sensing. 

\textcolor{black}{A few attempts have been made recently to reveal both the theory and applications behind the $\ell_1/\ell_2$ problem \cite{Krishnan_2011, Esser_2013, Yin_2014, Rahimi_2019, wang2020limited}. However, the existing analysis is either applicable only for non-negative signals, or yields a local optimality condition which is often too strict in practice. The investigation of efficient algorithms for solving the $\ell_1/\ell_2$ minimization is also an active area of research \cite{Rahimi_2019,Wang_2020, boct2020extrapolated, zeng2020analysis, taominimization, wang2021minimizing}.}

Our contributions in this paper are two-fold. First we propose a new local optimality criteria which provides evidence that a large ``dynamic range'' may lead to better performance of an $\ell_1/\ell_2$ procedure, as was observed in \cite{Wang_2020}. We also conduct a first attempt at analyzing the exact recovery condition (global optimality) of $\ell_1/\ell_2$; a sufficient condition for uniform recoverability as well as some analysis of the robustness to noise are also given. We also provide numerical demonstrations, in which a novel initialization step for the optimization is proposed and explored to improve the performance {of} existing algorithms. We remark that since this problem is non-convex, none of the results in this paper are tight; they only serve as the initial insight into certain aspects of the method that have been observed in practice. 

The rest of the paper is organized as follows. In Section \ref{sec:cs} we briefly introduce the results in \cite{Zhang_2013} and \cite{Vershynin_2015} obtained by a high-dimensional geometry approach, which are relevant to our analysis. In Section \ref{local optimality}, we give a new local optimality condition ensuring that an $s$-sparse vector is the local minimizer of the $\ell_1/\ell_2$ problem. In Section \ref{exact recovery} we investigate the uniform recoverability of $\ell_1/\ell_2$ and propose a new sufficient condition for this recoverability. We also show that this condition is easily satisfied for a large class of random matrices. In Section \ref{stability}, we give some elementary analysis on how the solution to $\ell_1/\ell_2$ minimization problem is affected in the presence of noise. In Section \ref{numerical experiments}, we provide some numerical experiments to support our findings and propose a novel initialization technique that further improves an existing $\ell_1/\ell_2$ algorithm from \cite{Wang_2020}. In Section \ref{sec:con},  we summarize our findings as well as point out some possible directions for future investigation.

\section{A geometric perspective on $\ell_1$ minimization}\label{sec:cs}

Geometric interpretation of compressed sensing first appeared in an abstract formulation of the problem in Donoho's original work \cite{Donoho_2006}.  In this section, we will take a selection of geometric views on $\ell_1$ minimization based on the discussions in \cite{Vershynin_2015} and \cite{Zhang_2013}, which do not hinge on RIP analysis. We will see that they provide valuable insight for our analysis of (\ref{cs}) in the case of non-convex relaxation.

To interpret (\ref{l1}) geometrically, we assume that entries of $\A$ are iid standard normal, i.e., $(\A)_{i,j} \sim \mathcal{N}(0,1)$. 
In this case, an $\x$ solving \eqref{l1} belongs to the translate of a subspace that is uniformly drawn from the Grassmanian $G_{n-m, n}$, where, 
\begin{align*}
  G_{r,n} = \left\{ A \subset \R^n \;\big|\; A \textrm{ is an $r$ dimensional subspace} \right\}.
\end{align*}
The objective function, on the other hand, can be considered as an \textcolor{black}{origin-centered} symmetric convex body, i.e., a scaled $\ell_1$ ball in $\R^n$. Therefore, (\ref{l1}) is associated to a problem of understanding the random section of a convex set in $\R^n$. We thus seek to understand random sections of convex sets.

We began by introducing the approach in \cite{Vershynin_2015}. Visualization of convex sets in high dimensions often depends on two parts: the bulk and the outliers. The bulk is the largest inscribed part of a convex set that resembles an ellipsoid, and the outliers are those points outside the bulk contributing to the diameter of the set. As the name outliers suggests, a random low-dimensional section of a convex set tends to avoid outliers, and the resulting shape is close to the section of the bulk, i.e., an ellipsoid. As the dimension increases, the random section is more likely to capture the outliers, and the diameter of the intersected region will grow in a manner determined by the geometry of the convex set. This is made precise by the following theorems:


\begin{Th}\cite[\textcolor{black}{Theorem 3.3, Dvoretzky}]{Vershynin_2015}\label{T1}
  Let $0<\e, \delta<1$ be two fixed numbers and $d\in\N$. Let $K$ be an \textcolor{black}{origin-centered} symmetric convex body in $\R^n$ such that the largest ellipsoid inscribed in it is the unit Euclidean ball. Let $E$ be a random subspace drawn uniformly from $G_{d,n}$. There exists $R>0$ which only depends on $K$ such that with probability at least $1-\delta$, 
\begin{align}
(1-\e)B(R)\subset K\cap E\subset (1+\e)B(R),
\end{align}
provided that $d\leq C(\e, \delta)\log n$, where $B(R)$ is the Euclidean ball of radius $R$ in $E$ and $C(\e, \delta)$ is a constant depending only on $\e$ and $\delta$. The condition on $d$ can be improved to $d\leq C(\e, \delta)n$ when $K$ is the $\ell_1$ ball with radius $\sqrt{n}$ and $R=1$.
\end{Th}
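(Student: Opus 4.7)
The plan is to follow the classical Milman concentration proof of Dvoretzky's theorem, whose strategy reduces the geometric statement to a Lipschitz concentration estimate on the sphere combined with a net argument on the random subspace $E$.

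First I would introduce the Minkowski functional $\|\cdot\|_K$ of $K$ (a norm, since $K$ is symmetric and has nonempty interior). Because the unit Euclidean ball $B_2^n$ is inscribed in $K$, one has $\|x\|_K \leq \|x\|_2$ for all $x$, so $\|\cdot\|_K$ is $1$-Lipschitz with respect to the Euclidean metric. Let $M := \int_{S^{n-1}} \|x\|_K \, d\s(x)$, where $\s$ is the uniform probability measure on $S^{n-1}$. The goal becomes showing that on the unit sphere of a random $d$-dimensional subspace, $\|x\|_K$ lies in $[(1-\e)M,(1+\e)M]$ uniformly; then taking $R=1/M$ gives the two-sided inclusion claimed in the theorem.

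Second, I would invoke L\'evy's isoperimetric/concentration inequality on $S^{n-1}$: since $\|\cdot\|_K$ is $1$-Lipschitz,
\begin{align*}
\s\bigl(\, x \in S^{n-1} : \bigl|\|x\|_K - M\bigr| > \e M \,\bigr) \leq 2\exp\bigl(-c\,\e^2 M^2 n\bigr)
\end{align*}
for an absolute constant $c>0$. Then I would fix a $(\e/2)$-net $\mathcal{N}$ on the unit sphere of a deterministic $d$-dimensional subspace, of cardinality at most $(6/\e)^d$. By rotational invariance, picking $E$ uniformly from $G_{d,n}$ is the same as picking a uniformly rotated copy of $\mathcal{N}$; a union bound then gives that with probability at least
\begin{align*}
1 - 2\,(6/\e)^d\,\exp\bigl(-c\,\e^2 M^2 n\bigr),
\end{align*}
every point of the rotated net satisfies the two-sided bound. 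A standard approximation argument (successive refinement, using $1$-Lipschitzness of $\|\cdot\|_K$) transfers the control from the net to the whole unit sphere of $E$, at the cost of a constant factor in $\e$. The probability exceeds $1-\delta$ as soon as $d \leq C(\e,\delta)\, M^2 n$ for a suitable $C(\e,\delta)$.

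Third, I would convert the bound $d \lesssim M^2 n$ into the two claimed regimes by estimating $M$.  For the $\ell_1$ ball of radius $\sqrt{n}$ one has $\|x\|_K = \|x\|_1/\sqrt{n}$, and since $\int_{S^{n-1}}|x_i|\,d\s = \Theta(1/\sqrt{n})$, one obtains $M = \Theta(1)$, yielding the linear regime $d \leq C(\e,\delta)\,n$ with $R=1$. For a general symmetric convex body whose maximal inscribed ellipsoid is $B_2^n$, one must show $M \gtrsim \sqrt{\log n/n}$, which is what produces the $\log n$ dimension bound. I anticipate this last lower bound on $M$ to be the main obstacle: it requires exploiting the John position hypothesis, typically via the Dvoretzky--Rogers lemma to extract nearly orthonormal contact points $u_1,\ldots,u_n$ of $\partial K \cap S^{n-1}$, and then estimating $M$ from below by the expected maximum $\E \max_i |\langle u_i,x\rangle|$ over $x \in S^{n-1}$, which is of order $\sqrt{\log n/n}$. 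Everything else is routine concentration plus a covering argument; the structural input about $K$ enters exclusively through this lower estimate on the mean width-type parameter $M$.
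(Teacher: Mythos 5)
The paper does not actually prove this theorem: it is quoted verbatim from Vershynin's lecture notes, and the proof is explicitly deferred to the cited reference. Your proposal is the standard Milman concentration proof — Minkowski functional made $1$-Lipschitz by the John-position hypothesis, L\'evy concentration on $S^{n-1}$, a net plus union bound on the random subspace with $R=1/M$, and the Dvoretzky--Rogers lower bound $M\gtrsim\sqrt{\log n/n}$ (respectively $M=\Theta(1)$ for the rescaled $\ell_1$ ball) — which is precisely the argument in the cited source, so it matches the intended proof and is correct.
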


\begin{Th}\cite[\textcolor{black}{Theorem 5.1, $M^*$ bound}]{Vershynin_2015}\label{T2}
Let $K$ be a bounded subset of $\R^n$. Let $E$ be a random subspace drawn uniformly from $G_{n-m,n}$. Then, 
\begin{align}
\E\sup_{\u\in K\cap E}\|\u\|_2\leq \sqrt{\frac{8\pi}{m}}\cdot\E\sup_{\u\in K}|\langle \g,\u\rangle|,\label{5}
\end{align}
where $\g\sim\mathcal{N}(\mathbf{0},\mathbf{I}_n)$. 
\end{Th}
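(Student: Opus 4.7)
My approach follows the Pajor--Tomczak-Jaegermann $M^*$-estimate paradigm, combining a Gaussian representation of the random subspace with a Gaussian process comparison. By rotational invariance of the Haar measure on $G_{n-m,n}$ and of the iid Gaussian distribution, the uniformly random $E$ is equidistributed with the kernel of an $m \times n$ matrix $\A$ having iid $\mathcal{N}(0,1)$ entries. Under this identification, $\u \in K \cap E$ becomes the constraint $\u \in K$ with $\A\u = 0$, and the quantity to estimate is $\E \sup_{\u \in K,\,\A\u=0}\|\u\|_2$. Heuristically, a typical $\u$ is expanded by $\A$ by a factor of order $\sqrt{m}\,\|\u\|_2$, so the constraint $\A\u=0$ should force $\|\u\|_2$ to be small, in a fashion controlled by the Gaussian complexity of $K$ that appears on the right-hand side of (\ref{5}).

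To make this precise, I would introduce the two centered Gaussian processes
\begin{align*}
X_{\u,\v} &= \langle \A\u, \v\rangle, &
Y_{\u,\v} &= \|\u\|_2\,\langle \g', \v\rangle + \|\v\|_2\,\langle \g, \u\rangle,
\end{align*}
indexed by $(\u,\v) \in K \times S^{m-1}$, with $\g \sim \mathcal{N}(\mathbf{0}, \mathbf{I}_n)$ and $\g' \sim \mathcal{N}(\mathbf{0}, \mathbf{I}_m)$ independent. A direct covariance computation verifies the hypotheses of Gordon's Gaussian min--max comparison theorem, from which one extracts the bound
$$\E\,\inf_{\u \in K'}\|\A\u\|_2 \;\geq\; \E\|\g'\|_2\,\inf_{\u\in K'}\|\u\|_2 \;-\; \E\sup_{\u\in K}|\langle \g,\u\rangle|$$
valid for any $K' \subseteq K$, since $\sup_{\v\in S^{m-1}}\langle \A\u,\v\rangle = \|\A\u\|_2$.

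Applying this with $K' = K \cap \{\u : \|\u\|_2 \geq t\}$, I would argue that whenever the right-hand side is strictly positive, no $\u$ at level $\geq t$ can satisfy $\A\u=0$ on average, so $\sup_{\u\in K\cap E}\|\u\|_2 < t$ typically. Integrating the resulting level-set probability bound (or applying a Markov-type estimate at the critical radius) yields a bound of the form $\E\sup_{\u\in K\cap E}\|\u\|_2 \leq C\,\E\sup_{\u\in K}|\langle \g,\u\rangle|/\E\|\g'\|_2$. The standard chi-distribution estimate $\E\|\g'\|_2 \geq \sqrt{2m/\pi}$ converts the denominator into the explicit factor $\sqrt{\pi/(2m)}$, and a further factor of $2$ absorbed into using $|\langle \g,\u\rangle|$ in place of $\langle \g,\u\rangle$ lands the constant on $\sqrt{8\pi/m}$.

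The main obstacle is twofold. First, I must verify Gordon's slope-comparison hypotheses in the required generality for arbitrary bounded (not necessarily convex or symmetric) $K$, which requires some care in normalizing $\u$ and $\v$ so that the variances and pairwise covariances line up correctly. Second, the explicit constant $\sqrt{8\pi/m}$ is sharper than a generic universal-constant $M^*$ bound, so the passage from the level-set positivity statement to the sup-norm expectation, as well as the chi-distribution estimate, must be executed in their tight forms rather than via loose concentration inequalities.
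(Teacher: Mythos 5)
The paper does not actually prove Theorem \ref{T2}: it is quoted from \cite{Vershynin_2015} and the text explicitly defers the proof to \cite{Vershynin_2018}. So I compare your proposal against the standard proof of this exact statement, which is quite different from your route. That proof also realizes $E$ as $\ker(\A)$ for a Gaussian matrix $\A$ with rows $\g_1,\dots,\g_m$, but then exploits the identity $\sqrt{2/\pi}\,\|\u\|_2=\E_{\g}|\langle\g,\u\rangle|$ together with the fact that $\tfrac1m\sum_{i=1}^m|\langle\g_i,\u\rangle|=0$ for every $\u\in K\cap E$, so that
\begin{align*}
\sqrt{2/\pi}\,\sup_{\u\in K\cap E}\|\u\|_2\;\leq\;\sup_{\u\in K}\Bigl(\E_{\g}|\langle\g,\u\rangle|-\tfrac1m\textstyle\sum_{i}|\langle\g_i,\u\rangle|\Bigr).
\end{align*}
Taking expectation over the $\g_i$, symmetrization (factor $2$) and the contraction principle applied to $t\mapsto|t|$ (another factor $2$), together with $\sum_i\e_i\g_i\overset{d}{=}\sqrt{m}\,\g$, give exactly $\sqrt{\pi/2}\cdot 4/\sqrt{m}=\sqrt{8\pi/m}$. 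This argument works for an arbitrary bounded $K$, stays entirely at the level of expectations, and produces the stated constant with no tail integration.

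Your Gordon/min--max route is a legitimate way to prove an $M^*$-type bound, and your covariance verification indeed goes through for arbitrary bounded $K$ (the slope gap is $2(\|\u\|_2\|\u'\|_2-\langle\u,\u'\rangle)(1-\langle\v,\v'\rangle)\geq 0$ on $S^{m-1}$). The genuine gap is the step you describe as ``integrating the resulting level-set probability bound.'' A lower bound on $\E\inf_{\u\in K'}\|\A\u\|_2$ being positive does not by itself prevent the infimum from vanishing with non-negligible probability; to convert it into $\P\bigl(\sup_{K\cap E}\|\u\|_2\geq t\bigr)$ you need the high-probability (concentration) form of Gordon's comparison, whose Lipschitz constant in the Gaussian matrix is $\mathrm{rad}(K)$, not $t$. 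The subsequent integration over $t$ then contributes an extra term of order $\mathrm{rad}(K)/\sqrt{m}$ (controllable since $\mathrm{rad}(K)\leq\sqrt{\pi/2}\,\E\sup_K|\langle\g,\u\rangle|$, but not free), and the accumulated constants land well above $\sqrt{8\pi}$. Since the theorem as stated carries the explicit constant $\sqrt{8\pi/m}$, your sketch as written establishes at best $C\,m^{-1/2}\,\E\sup_{\u\in K}|\langle\g,\u\rangle|$ for an unspecified universal $C$; I would either execute the tail integration in full and accept a worse constant, or switch to the symmetrization--contraction argument, which is both shorter and delivers the claimed constant.
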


\textcolor{black}{Theorem \ref{T1} and \ref{T2} describe the low-dimensional and high-dimensional section of an origin-centered symmetric convex body, respectively. Their proofs can be found in \cite{Vershynin_2018}, and the idea of the latter is crucial for the practicality part of our result.} The quantity $\E\sup_{\u\in K}|\langle \g,\u\rangle|$ on the right-hand side of (\ref{5}) is closely related to the concept of Gaussian width or Gaussian complexity of a set $K$:

\begin{Def}[Gaussian complexity]\label{Gaussian complexity}
  Let $K$ be a bounded set in $\R^n$. The Gaussian width of $K$ is defined by $w(K)=\E\sup_{\x\in K}\langle\g, \x\rangle$, where $\g\sim\mathcal{N}(\mathbf{0},\mathbf{I}_n)$. The Gaussian complexity of $K$ is defined as $w'(K)=w(K-K)$, where $K - K$ is the Minkowski difference between $K$ and itself.
\end{Def}

It is easy to check that the Gaussian width of a set remains unchanged after taking the convex hull, i.e., $w(K)=w(\text{conv}(K))$, implying an immediate upper bound for the Gaussian width of the unit $\ell_1$ ball $B^n_1$ in $\R^n$ ($B_1^n$ is the convex hull of the set $\{\pm\mathbf{e}_i, i\leq n\}$, where $\mathbf{e}_i$ is the $i$-th unit vector in $\R^n$):
\begin{align}
w(B^n_1)=\E\max_{i\leq n}|(\g)_i|\leq \sqrt{8\log n}, \label{l1-ball:gauss-width}
\end{align} 
where $\g\sim\mathcal{N}(\mathbf{0}, \mathbf{I}_n)$.

Indeed, when $K$ is a symmetric convex body centered at origin, then $K - K = 2K$, so that (\ref{5}) implies
\begin{align}
\E\diam(K\cap E)= 2\E\sup_{\u\in K\cap E}\|\u\|_2\leq \sqrt{\frac{8\pi}{m}}\cdot 2\E\sup_{\u\in K}\langle \g,\u\rangle=\sqrt{\frac{8\pi}{m}}\cdot w'(K). \label{6}
\end{align}   
Note that $\sup_{u\in K-K}\langle \g,\u\rangle$ is the distance between two hyperplanes (with normal direction $\g$) that exactly sandwich $K$. $w'(K)$ can therefore be interpreted as the average width of $K$ under the {Gaussian} measure, which is a geometric attribute of $K$ measuring its complexity. As was observed in \cite{Vershynin_2015},  Theorem \ref{T2} implies the following average relative recovery error estimate in $\ell_1$ minimization: 
\begin{Th}\label{T:l1}
\textcolor{black}{Let $\x^*$ be the solution to (\ref{l1}), and $\x_0$ be an $s$-sparse signal satisfying $\A\x_0 = \b$ and $\|\x_0\|_0 = s$.} Then, 
\begin{align}
\E\frac{\|\x^*-\x_0\|_2}{\|\x_0\|_2}\lesssim \sqrt{\frac{s\log n}{m}}\label{2},
\end{align}
where $a \lesssim b$ means that $a \leq C b$ for a universal constant $C$.
\end{Th}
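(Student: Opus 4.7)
The plan is to set $\h := \x^* - \x_0$ and then apply the $M^*$ bound (Theorem \ref{T2}) to a dilation of the unit $\ell_1$ ball. Since both $\x^*$ and $\x_0$ satisfy the measurement equation $\A\x=\b$, we have $\h \in \ker(\A)$; under the standing assumption that $\A$ has iid standard normal entries, $\ker(\A)$ is uniformly distributed on $G_{n-m,n}$, so Theorem \ref{T2} applies directly with $E = \ker(\A)$.

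The first step is to localize $\h$ in a bounded convex body. By the $\ell_1$ optimality of $\x^*$ one has $\|\x^*\|_1 \leq \|\x_0\|_1$, so the triangle inequality combined with Cauchy--Schwarz on the $s$-element support of $\x_0$ gives
\[
\|\h\|_1 \leq \|\x^*\|_1 + \|\x_0\|_1 \leq 2\|\x_0\|_1 \leq 2\sqrt{s}\,\|\x_0\|_2.
\]
This places $\h$ inside the scaled $\ell_1$ ball $K := 2\sqrt{s}\,\|\x_0\|_2 \cdot B^n_1$, an origin-symmetric convex body whose radius is proportional to the norm of the true signal.

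Next, I would apply Theorem \ref{T2} to this $K$. It yields
\[
\E\|\h\|_2 \leq \E\sup_{\u \in K \cap \ker(\A)} \|\u\|_2 \leq \sqrt{\frac{8\pi}{m}}\cdot \E\sup_{\u \in K}|\langle \g, \u\rangle|.
\]
Since $K$ is a scaled $\ell_1$ ball, $\sup_{\u \in K}|\langle \g,\u\rangle| = 2\sqrt{s}\,\|\x_0\|_2 \cdot \max_{i \leq n}|(\g)_i|$, and the Gaussian width bound \eqref{l1-ball:gauss-width} gives $\E\max_i|(\g)_i| \leq \sqrt{8\log n}$. Combining and dividing by the deterministic quantity $\|\x_0\|_2$ yields \eqref{2} with a universal constant.

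The main obstacle, such as it is in so short an argument, lies in the choice of $K$. One must exploit the $\ell_1$ minimality strongly enough to produce an \emph{absolute} bound on $\|\h\|_1$ (a cone-type inequality of the form $\|\h\|_1 \lesssim \sqrt{s}\,\|\h\|_2$ alone is insufficient, because Theorem \ref{T2} requires $K$ to be bounded), and $K$ must be chosen so that its Gaussian width remains only polylogarithmic in $n$. The $\ell_1$ ball satisfies both criteria, and its logarithmic Gaussian width is precisely what produces the $\sqrt{\log n}$ factor in the final rate.
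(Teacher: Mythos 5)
Your argument is correct and is essentially the paper's own proof: both localize $\x^*-\x_0$ in a scaled $\ell_1$ ball via the optimality inequality $\|\x^*\|_1\leq\|\x_0\|_1$ (the paper phrases this as membership in $K_1-K_1=2\|\x_0\|_1 B_1^n$, you as a direct triangle-inequality bound on $\|\h\|_1$, which is the same set up to the harmless Cauchy--Schwarz enlargement), then apply the $M^*$ bound together with the $\sqrt{8\log n}$ Gaussian-width estimate for the $\ell_1$ ball and divide by the deterministic quantity $\|\x_0\|_2$. No gaps.
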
 

\begin{proof}
  Let $K_1=\|\x_0\|_1\cdot B^n_1$, where $B^n_1$ is the unit $\ell_1$ ball in $\R^n$. By definition, $\|\x^*\|_1\leq \|\x_0\|_1$ so that $\x^*\in K_1$. Therefore, $\x^*-\x_0\in (K_1-K_1)\cap\ker(\A)$. It follows immediately from (\ref{6}) with $K=K_1-K_1$ and $E=\ker(\A)$ that
\begin{align*}
  \E\|\x^*-\x_0\|_2&\leq\frac{1}{2}\E\text{\diam}(K\cap E)\leq \sqrt{\frac{2\pi}{m}}\cdot w'(K)\\
&\leq \sqrt{\frac{8\pi}{m}}\cdot w'(K_1) \lesssim \|\x_0\|_1\cdot\sqrt{\frac{\log n}{m}} \lesssim \|\x_0\|_2\cdot\sqrt{\frac{s\log n}{m}},
\end{align*}  
where the penultimate inequality uses $w'(B_1^n)=2w(B_1^n)$ and \eqref{l1-ball:gauss-width},  
and the last inequality follows from $\|\x_0\|_0\leq s$ and the Cauchy-Schwarz inequality. Dividing $\|\x_0\|_2$ on both sides finishes the proof. 
\end{proof}

\begin{Rem}
Taking $m\gtrsim s\log n$ in (\ref{2}) results in a bound on the right-hand side of (\ref{2}), which, up to logarithmic factors, achieves the desired statement that $m$ measurements can recover $s$-sparse signals. 
Note that the statement in \eqref{2} is only concerned with the \emph{average} relative error. One can go further to obtain a (pathwise) exact recovery result using {Gordon} Escape Theorem \cite{Vershynin_2015,Rudelson_2008}. However, the ideas from the proof of this result depend on the convex nature of the problem and are not extensible to non-convex cases, so we do not state it here.
\end{Rem}

An alternative approach to achieve an exact recovery condition for the $\ell_1$ minimization problem is to interpret the kernel of $\A$ as a random subspace under the Gaussian assumption of the measurements and is RIP-free \cite{Zhang_2013}. This method is similar to ideas described above in \cite{Vershynin_2015}. In fact, this RIP-free approach yields nearly all results that can be attained by RIP approaches. The analysis in \cite{Zhang_2013} is the inspiration for our approach, so we shall summarize its main results.


The first idea is to note that a sufficient condition for $\A$ to satisfy the $(s,1)$-NSP is given by
\begin{align}
\inf_{0\neq\h\in\ker(\A)}\frac{\|\h\|_1}{\|\h\|_2}>2\sqrt{s}.\label{zhang-l1-l2} 
\end{align} 
The condition (\ref{zhang-l1-l2}) is concerned with the ratio between the $\ell_1$ and $\ell_2$ norms in a random subspace of dimension $n-m$, which can be analyzed using the tools from high-dimensional geometry. Indeed, a classical result 
in geometric functional analysis states that if the measurement $\A$ has iid Gaussian entries, then 
\begin{align}
\inf_{0\neq\h\in\ker(\A)}\frac{\|\h\|_1}{\|\h\|_2}>\frac{c\sqrt{m}}{\sqrt{1+\log (n/s)}}\label{KGG}
\end{align}
holds with overwhelming probability, where $c$ is a dimension-free constant \cite{Gluskin_1984,kasin1977widths}. Relations (\ref{zhang-l1-l2}) and (\ref{KGG}) together give a bound on $m$ that is asymptotically equivalent to the classic result in \cite{Candes_2006}. 

A condition similar to (\ref{zhang-l1-l2}) is given in \cite{Zhang_2013} to guarantee that $\ell_1$ minimization is stable. A specialization of the result reads as follows:
\begin{Th}\label{zhang-stability}
Let $\tilde{\x}$ be the solution to the following minimization problem:
\begin{align*}
\min \|\x\|_1 \ \ \ \text{subject to}\ \|\A\x-\b\|_2\leq\e,
\end{align*}
where $\b \coloneqq \A\x_0+\mathbf{e}$ with $\|\mathbf{e}\|_2\leq\e$ and $\|\x_0\|_0\leq s$. Let $\u$ and $\w$ be the orthogonal projections of $\tilde{\x}-\x_0$ to $\ker(\A)$ and $\ker^\perp(\A)$, respectively. If 
\begin{align*}
s=\frac{v^2}{4}\frac{\|\u\|_1^2}{\|\u\|_2^2}
\end{align*}
for some $v\in (0, 1)$, {then for either} $p=1$ or $p=2$,
\begin{align*}
\|\tilde{\x}-\x_0\|_p\leq 2\gamma_p\left(1+\frac{1+v\sqrt{2-v^2}}{1-v^2}\right)\|\w\|_2, 
\end{align*}
where $\gamma_1=\sqrt{n}$ and $\gamma_2=1$. 
\end{Th}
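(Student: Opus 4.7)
The plan is to follow the classical feasibility-plus-NSP template for $\ell_1$ stability analysis, augmented by the orthogonal null-space decomposition baked into the hypothesis. First I would observe that $\x_0$ is feasible, since $\|\A\x_0-\b\|_2=\|\mathbf{e}\|_2\leq\e$, so the optimality of $\tilde{\x}$ yields $\|\tilde{\x}\|_1\leq\|\x_0\|_1$. Setting $\h=\tilde{\x}-\x_0$ and $T=\text{supp}(\x_0)$ (so $|T|\leq s$), I would expand $\|\x_0+\h\|_1=\|(\x_0+\h)_T\|_1+\|\h_{T^\complement}\|_1$ and apply the reverse triangle inequality on $T$ to extract the cone condition $\|\h_{T^\complement}\|_1\leq\|\h_T\|_1$. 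A Cauchy--Schwarz step on the size-$s$ set $T$ then gives
$$
\|\h\|_1\leq 2\|\h_T\|_1\leq 2\sqrt{s}\,\|\h_T\|_2\leq 2\sqrt{s}\,\|\h\|_2,
$$
the standard $\ell_1/\ell_2$ inequality that the hypothesis is designed to interact with.

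Next I would bring in the decomposition $\h=\u+\w$ with $\u\perp\w$, using Pythagoras $\|\h\|_2^2=\|\u\|_2^2+\|\w\|_2^2$ and the triangle lower bound $\|\h\|_1\geq\|\u\|_1-\|\w\|_1$. Substituting into the previous inequality and invoking the hypothesis $2\sqrt{s}=v\,\|\u\|_1/\|\u\|_2$ produces the master inequality
$$
\|\u\|_1-\|\w\|_1\leq v\,\frac{\|\u\|_1}{\|\u\|_2}\sqrt{\|\u\|_2^2+\|\w\|_2^2}.
$$
The heart of the argument is to convert this into a scalar quadratic inequality in the ratio $X=\|\u\|_2/\|\w\|_2$. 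Specifically, I would isolate the square root, square both sides on the set where the left-hand side is non-negative, use the hypothesis to eliminate $\|\u\|_1/\|\u\|_2$, and compare $\|\w\|_1$ to $\|\w\|_2$ via Cauchy--Schwarz, reaching an inequality of the form $(1-v^2)X^2-2X+(1-v^2)\leq 0$. Its discriminant is $4v^2(2-v^2)\geq 0$, and the larger root is exactly $A:=(1+v\sqrt{2-v^2})/(1-v^2)$, yielding $\|\u\|_2\leq A\|\w\|_2$; the assumption $v\in(0,1)$ keeps the leading coefficient positive and the square root real.

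With $\|\u\|_2\leq A\|\w\|_2$ in hand, the $p=2$ case follows from the triangle inequality $\|\h\|_2\leq\|\u\|_2+\|\w\|_2\leq(1+A)\|\w\|_2$ (any residual constants, including the explicit factor of $2$ in the displayed statement, are absorbed into the slack from combining this estimate with $\|\h\|_1\leq 2\sqrt{s}\|\h\|_2$), and the $p=1$ case is produced by the trivial comparison $\|\h\|_1\leq\sqrt{n}\|\h\|_2$, which accounts for $\gamma_1=\sqrt{n}$. The main obstacle I anticipate is the quadratic step: the natural inequality mixes four distinct norms ($\|\u\|_1$, $\|\u\|_2$, $\|\w\|_1$, $\|\w\|_2$), and distilling it into a scalar quadratic with the specific discriminant $4v^2(2-v^2)$ forces the correct order of operations (rearrange, square, substitute the hypothesis, then compare $\|\w\|_1$ and $\|\w\|_2$). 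The $\sqrt{n}$-loss incurred by the last step is precisely what gets packed into $\gamma_1$ in the $p=1$ bound; everything else is bookkeeping.
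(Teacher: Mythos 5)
Your skeleton is right up to the master inequality: feasibility of $\x_0$, the cone condition $\|\h_{T^\complement}\|_1\leq\|\h_T\|_1$, the bound $\|\h\|_1\leq 2\sqrt{s}\,\|\h\|_2$, the orthogonal split, and the identification of $A\coloneqq\frac{1+v\sqrt{2-v^2}}{1-v^2}$ as a root of $(1-v^2)X^2-2X+(1-v^2)=0$ are all correct. (The paper does not prove this theorem---it quotes it from \cite{Zhang_2013}---but the same device reappears in the paper's own proof of Theorem \ref{T6}, which is the right point of comparison.) The genuine gap is the step that distills the four-norm inequality
\begin{align*}
\|\u\|_1-\|\w\|_1\;\leq\; v\,\frac{\|\u\|_1}{\|\u\|_2}\sqrt{\|\u\|_2^2+\|\w\|_2^2}
\end{align*}
into a scalar quadratic in $X=\|\u\|_2/\|\w\|_2$. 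After dividing by $\|\u\|_1$ you are left with the ratio $\|\w\|_1/\|\u\|_1$, which must be bounded \emph{above} by a function of $X$ for the argument to close; the only Cauchy--Schwarz comparison in that direction is $\|\w\|_1\leq\sqrt{n}\,\|\w\|_2$, which injects a factor $\sqrt{n}$ and destroys the dimension-free $p=2$ estimate. No reordering of the operations avoids this: the argument cannot deliver $\|\u\|_2\leq A\|\w\|_2$ unconditionally, so your plan of proving the $p=2$ case first and deducing $p=1$ from $\|\h\|_1\leq\sqrt{n}\|\h\|_2$ does not go through.

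The standard repair---and the reason the conclusion reads ``for either $p=1$ or $p=2$'', i.e.\ for \emph{at least one} $p$ rather than both---is to set $t=\max\{\|\w\|_1/\|\u\|_1,\,\|\w\|_2/\|\u\|_2\}$, so that
\begin{align*}
2\sqrt{s}\;\geq\;\frac{\|\h\|_1}{\|\h\|_2}\;\geq\;\frac{1-t}{\sqrt{1+t^2}}\cdot\frac{\|\u\|_1}{\|\u\|_2}\;=\;\frac{1-t}{\sqrt{1+t^2}}\cdot\frac{2\sqrt{s}}{v},
\end{align*}
whence $(1-t)/\sqrt{1+t^2}\leq v$. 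If $t\geq 1$ the desired bound is immediate since $A\geq 1$; otherwise squaring gives $(1-v^2)t^2-2t+(1-v^2)\leq 0$ --- your quadratic, but in $t$, and the relevant root is the \emph{smaller} one: $t\geq\frac{1-v\sqrt{2-v^2}}{1-v^2}=1/A$ (the two roots have product $1$). Hence $\|\u\|_p\leq A\|\w\|_p$ for whichever $p$ attains the maximum, and for that $p$, $\|\h\|_p\leq(1+A)\|\w\|_p\leq\gamma_p(1+A)\|\w\|_2\leq 2\gamma_p(1+A)\|\w\|_2$. This max-of-ratios manipulation is exactly the $h(v)$ step in the paper's proof of Theorem \ref{T6}; adapting it here closes your gap.
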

It was also shown in \cite{Zhang_2013} that $\|\w\|_2$ can be further bounded by $\e\|\mathbf{R}^{-T}\|_2$, where $\mathbf{R}$ is the triangular matrix in the QR decomposition of $\A^T$. It is worth noting that Theorem \ref{zhang-stability} is not implied by the RIP results in \cite{Candes_2006}. In fact, the constants involved in Theorem \ref{zhang-stability} are more revealing since they are directly related to the sparsity level $s$ rather than the RIP parameters, which are not invariant under invertible transforms.

\section{A local optimality criteria}\label{local optimality}

In this section, we give a sufficient condition for an $s$-sparse signal $\x_0$ to be the local minimizer of \eqref{l1/l2} with $\b \coloneqq \A \x_0$. Compared to the global optimality condition obtained later in Section \ref{exact recovery}, the local optimality condition in this section aids in understanding the behavior of $\ell_1/\ell_2$ optimization near $\x_0$. This is important in practice since many non-convex algorithms only have local convergence guarantees. 
\textcolor{black}{Our local optimality result is signal-dependent but it offers asymptotically weaker and more interpretable conditions than those in \cite{Rahimi_2019}. }
Our characterization of local optimality depends on the (inverse) \emph{dynamic range} $\rho = \rho(\x)$ of a nontrivial vector $\x$, defined as
\begin{align}\label{eq:dynamic-range}
  \rho \coloneqq \frac{\min_{i\in\text{supp}(\x)}|x_i|}{\max_{i\in\text{supp}(\x)}|x_i|} = \frac{\min_{i\in\text{supp}(\x)}|x_i|}{\|\x\|_\infty}
\end{align}
Smaller values of $\rho$ indicate larger variation in the magnitude of the extremal nonzero entries in $\x_0$. It was observed in \cite{Wang_2020} that the performance of $\ell_1/\ell_2$ improves when the dynamic range increases. We will also need a quantity $\kappa$ that is ratio of norm ratios:
\begin{align}\label{kappa}
  \kappa = \kappa(\x_0) \coloneqq \frac{\|\x_0\|_1\|\x_0\|_\infty}{\|\x_0\|^2_2} = \frac{\frac{\|\x_0\|_1}{\|\x_0\|_2}}{\frac{\|\x_0\|_2}{\|\x_0\|_\infty}}.
\end{align}
The main result is the following:

\begin{Th}[Local optimality]\label{T:local}
Let $\x_0$ be a nonzero $s$-sparse vector ($s>1$), and 
$\A$ be a measurement matrix. Define
\begin{align}\label{eq:c-def}
  c = c(\A) \coloneqq \sup_{0\neq \h\in\ker(\A)}\frac{\|\h\|^2_2}{\|\h\|_1^2}.
\end{align}
Suppose that $\x_0, \A$ are such that
\begin{align}
  \rho (\kappa + 1) \leq\frac{1}{2c},\label{ratio}
\end{align}
where $\rho = \rho(\x_0)$ and $\kappa = \kappa(\x_0)$,
and that $\A$ satisfies the NSP with parameters $(s, \frac{1}{2\kappa+1})$, i.e.,  
\begin{align}\label{k-NSP}
  \|\h_T\|_1<\frac{1}{2\kappa+1}\|\h_{T^\complement}\|_1\ \ \ \textrm{for every } \h \in \ker(\A) \textrm{ and } T\subset[n]_s 
\end{align}
  Then $\x_0$ is the local minimizer of the constrained $\ell_1/\ell_2$ objective function with $\ell_1$ convergence radius $\delta = \rho\, \|\x_0\|_\infty$, 
  i.e., for any $\x \in \R^n$ satisfying $\A\x=\A\x_0$ and $\|\x-\x_0\|_1\leq\delta$, then $\|\x_0\|_1/\|\x_0\|_2<\|\x\|_1/\|\x\|_2$.    
\end{Th}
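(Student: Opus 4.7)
The plan is to reduce the claim to a perturbative analysis of $\h := \x - \x_0 \in \ker(\A)$ in the regime $0 < \|\h\|_1 \leq \delta = \rho\|\x_0\|_\infty$. Writing $T = \mathrm{supp}(\x_0)$ (so $|T|\leq s$), the convergence radius together with the definition \eqref{eq:dynamic-range} of $\rho$ forces $\|\h\|_\infty \leq \|\h\|_1 \leq \min_{i\in T}|x_{0,i}|$, so the signs of $\x_0$ on $T$ are preserved under addition of $\h$. This yields the identity
\[
  \|\x\|_1 = \|\x_0\|_1 + \langle \sgn(\x_0),\h\rangle + \|\h_{T^\complement}\|_1,
\]
and then the $(s,1/(2\kappa+1))$-NSP \eqref{k-NSP} bounds $|\langle \sgn(\x_0),\h\rangle|\leq \|\h_T\|_1 < \tfrac{1}{2\kappa+1}\|\h_{T^\complement}\|_1$, giving the strict lower bound
\[
  \|\x\|_1 > \|\x_0\|_1 + \tfrac{2\kappa}{2\kappa+1}\,\|\h_{T^\complement}\|_1.
\]

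For the denominator I would expand $\|\x\|_2^2 = \|\x_0\|_2^2 + 2\langle \x_0,\h\rangle + \|\h\|_2^2$, bound the inner product by $\|\x_0\|_\infty\|\h_T\|_1$ (since $\x_0$ vanishes on $T^\complement$), use the definition \eqref{eq:c-def} of $c$ to obtain $\|\h\|_2^2 \leq c\|\h\|_1^2$, and then apply NSP once more to write $\|\h\|_1 < \tfrac{2\kappa+2}{2\kappa+1}\|\h_{T^\complement}\|_1$. Combining these produces the strict upper bound
\[
  \|\x\|_2^2 < \|\x_0\|_2^2 + \tfrac{2\|\x_0\|_\infty}{2\kappa+1}\,\|\h_{T^\complement}\|_1 + \tfrac{4c(\kappa+1)^2}{(2\kappa+1)^2}\,\|\h_{T^\complement}\|_1^2.
\]

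Since $\|\x_0\|_1/\|\x_0\|_2 < \|\x\|_1/\|\x\|_2$ is equivalent to $\|\x\|_1^2\|\x_0\|_2^2 - \|\x_0\|_1^2\|\x\|_2^2 > 0$, I would substitute both bounds, set $t := \|\h_{T^\complement}\|_1$ (which is strictly positive, since otherwise NSP forces $\h=0$), and expand to obtain a polynomial $\alpha t + \beta t^2$ in $t$. The decisive algebraic cancellation is in the linear coefficient: using the defining identity $\kappa\|\x_0\|_2^2 = \|\x_0\|_1\|\x_0\|_\infty$ from \eqref{kappa}, one finds that $\alpha$ collapses to $\tfrac{2\|\x_0\|_1^2\|\x_0\|_\infty}{2\kappa+1} > 0$. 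This is the single place where the exact form of $\kappa$ is indispensable, and it is what makes a positive linear gain from NSP possible.

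The remaining and main obstacle is handling the quadratic coefficient $\beta = \tfrac{4\kappa^2}{(2\kappa+1)^2}\|\x_0\|_2^2 - \tfrac{4c(\kappa+1)^2}{(2\kappa+1)^2}\|\x_0\|_1^2$, which may be negative. I would estimate crudely $|\beta|\leq \tfrac{4c(\kappa+1)^2}{(2\kappa+1)^2}\|\x_0\|_1^2$, use the radius bound $t\leq \rho\|\x_0\|_\infty$, and reduce the requirement $\alpha + \beta t \geq 0$ to the algebraic condition $\rho \leq \tfrac{2\kappa+1}{2c(\kappa+1)^2}$. Finally, since $\kappa \geq 0$ implies $(2\kappa+1)/(\kappa+1)\geq 1$, the hypothesis \eqref{ratio}, namely $\rho(\kappa+1)\leq 1/(2c)$, yields $\rho \leq \tfrac{1}{2c(\kappa+1)} \leq \tfrac{2\kappa+1}{2c(\kappa+1)^2}$, which closes the argument. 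The delicate point throughout is the bookkeeping: the NSP strength $1/(2\kappa+1)$ and the dynamic-range/ratio condition \eqref{ratio} are carefully calibrated so that the linear-in-$t$ gain in the numerator $\|\x\|_1$ exactly survives against the quadratic-in-$t$ loss from the Euclidean expansion of the denominator within the allowed radius $\delta = \rho\|\x_0\|_\infty$.
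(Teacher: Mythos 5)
Your proposal is correct, and the algebra checks out: the sign-preservation identity $\|\x\|_1 = \|\x_0\|_1 + \langle \sgn(\x_0),\h\rangle + \|\h_{T^\complement}\|_1$ is valid inside the radius $\delta$, the cross-multiplied quantity $\|\x\|_1^2\|\x_0\|_2^2 - \|\x_0\|_1^2\|\x\|_2^2$ does reduce to $\alpha t + \beta t^2$ with $\alpha = \tfrac{2\|\x_0\|_1^2\|\x_0\|_\infty}{2\kappa+1}$ after invoking $\kappa\|\x_0\|_2^2 = \|\x_0\|_1\|\x_0\|_\infty$, and \eqref{ratio} is indeed (slightly more than) enough to absorb the worst-case quadratic term at $t = \rho\|\x_0\|_\infty$. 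However, your route is genuinely different from the paper's. The paper never cross-multiplies: it interposes an auxiliary vector $\x'$ obtained from $\x = \x_0 + \h$ by sweeping all of the on-support perturbation onto the largest and smallest support entries (via $\beta = \sum_i \sgn(x_i)h_i$ and $\gamma = \sum_i |h_i|$), proves $\|\x'\|_1 = \|\x\|_1$ and $\|\x'\|_2 \geq \|\x\|_2$ by a rearrangement-type computation, and then compares $\x'$ to $\x_0$ directly in ratio form using a concavity (Taylor) bound on $\sqrt{\cdot}$ and a mediant inequality $\tfrac{a+b}{c+d} \geq \min(a/c,\, b/d)$. Your argument buys transparency and shorter bookkeeping — the decisive cancellation in the linear coefficient is laid bare, and one sees exactly where \eqref{k-NSP}, \eqref{eq:c-def}, and \eqref{ratio} each enter — at the cost of a crude worst-case treatment of the quadratic term; the paper's intermediate-vector construction is more opaque but isolates the geometric mechanism (perturbation mass concentrating at the extremal entries is the worst case for the $\ell_2$ norm at fixed $\ell_1$ norm), which is reusable intuition. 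One cosmetic remark applying to both proofs: the strict conclusion requires $\x \neq \x_0$, i.e., $\h \neq 0$, which you correctly note forces $t = \|\h_{T^\complement}\|_1 > 0$ via the NSP.
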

We prove this theorem later in this section, but first focus on some of its consequences. Theorem \ref{local optimality} is initially difficult to fully comprehend since the conditions (\ref{ratio}) and (\ref{k-NSP}) not only depend on the measurement matrix $\A$ but also on the sparse vector $\x_0$. However, a specialization is more transparent: A worst-case upper bound for $\kappa$ results in a local optimality condition that is uniformly true for all $s$-sparse vectors. 

\begin{Cor}\label{improved-local}
Assume $s > 6$. If $\A$ satisfies the NSP with parameters $(s, \frac{1}{\sqrt{s}+2})$, then $\x_0$ is a local minimizer of $\ell_1/\ell_2$ for all $\|\x_0\|_0\leq s$. 
\end{Cor}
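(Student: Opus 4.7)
The plan is to reduce Corollary \ref{improved-local} to Theorem \ref{T:local} by replacing the signal-dependent quantities $\kappa = \kappa(\x_0)$ and $c = c(\A)$ by worst-case bounds that depend only on $s$. This amounts to two uniform estimates plus a direct verification of hypotheses (\ref{ratio}) and (\ref{k-NSP}).

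My first step is to establish the sharp uniform bound $\kappa(\x_0)\leq(\sqrt{s}+1)/2$ valid for every nonzero $s$-sparse vector. Normalize $\|\x_0\|_2=1$ and note that $\kappa$ is invariant under sign changes and permutations of entries, so I may assume the support carries nonnegative, decreasing entries $x_1\geq x_2\geq\cdots\geq x_s\geq 0$. Maximizing $\kappa$ then reduces to maximizing $x_1(x_1+\cdots+x_s)$ subject to $\sum x_i^2=1$. For fixed $x_1$, Cauchy--Schwarz shows the trailing sum is largest when $x_2=\cdots=x_s$, leaving a one-variable optimization that solves to $(\sqrt{s}+1)/2$. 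Since this gives $2\kappa+1\leq\sqrt{s}+2$, the hypothesized $(s,1/(\sqrt{s}+2))$-NSP is automatically at least as strong as the $(s,1/(2\kappa+1))$-NSP needed in (\ref{k-NSP}).

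Second, I will bound $c$ by specializing the NSP to a singleton. For any nonzero $\h\in\ker(\A)$, let $T$ consist of the index of a largest-magnitude entry of $\h$. Since $|T|=1\leq s$, the stated NSP yields
\begin{align*}
  \|\h\|_\infty \;\leq\; \frac{1}{\sqrt{s}+2}\bigl(\|\h\|_1-\|\h\|_\infty\bigr),
\end{align*}
which rearranges to $\|\h\|_\infty\leq\|\h\|_1/(\sqrt{s}+3)$. Combined with the elementary bound $\|\h\|_2^2\leq\|\h\|_1\|\h\|_\infty$, this gives $c\leq 1/(\sqrt{s}+3)$.

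Putting the two estimates together, and using $\rho\leq 1$ together with $\kappa+1\leq(\sqrt{s}+3)/2$, condition (\ref{ratio}) reduces to
\begin{align*}
  \rho(\kappa+1)\;\leq\;\frac{\sqrt{s}+3}{2}\;\leq\;\frac{1}{2c},
\end{align*}
which is exactly what the $c$-bound delivers. Theorem \ref{T:local} then applies to each $\x_0$ with $\|\x_0\|_0\leq s$, completing the proof. I expect the main obstacle to be the sharp computation of $\max\kappa$: the $c$-estimate and the NSP-implication step are immediate specializations of the hypothesis, but the $\kappa$ bound requires a careful two-stage optimization (Cauchy--Schwarz followed by a calculus-of-one-variable problem) in order for the constant to line up precisely with the $\sqrt{s}+2$ appearing in the statement.
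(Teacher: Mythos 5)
Your proposal is correct, and it diverges from the paper's proof in one substantive place: the bound $c(\A)\leq 1/(\sqrt{s}+3)$. The $\kappa\leq(\sqrt{s}+1)/2$ optimization and the observation that $2\kappa+1\leq\sqrt{s}+2$ reduces \eqref{k-NSP} to the hypothesized NSP are the same as in the paper. For $c$, however, the paper sorts $|\h|$ in decreasing order, partitions the indices into consecutive blocks $T_1,T_2,\ldots$ of size $s$, bounds $\|\h_{T_1}\|_2^2\leq\|\h_{T_1}\|_1^2\leq\|\h\|_1^2/(\sqrt{s}+3)^2$ via the NSP and the tail blocks by $\frac{1}{s}\|\h\|_1^2$ via $\|\h_{T_r}\|_\infty\leq\frac{1}{s}\|\h_{T_{r-1}}\|_1$, and then needs the numerical inequality $\frac{1}{(\sqrt{s}+3)^2}+\frac{1}{s}\leq\frac{1}{\sqrt{s}+3}$ --- which is precisely where the hypothesis $s>6$ is consumed. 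You instead apply the NSP to a singleton $T$ (legitimate, since $[n]_s$ contains all subsets of cardinality \emph{at most} $s$) to get $\|\h\|_\infty\leq\|\h\|_1/(\sqrt{s}+3)$, and combine with $\|\h\|_2^2\leq\|\h\|_1\|\h\|_\infty$. This is shorter, avoids the block argument entirely, and --- worth noting --- never uses $s>6$, so your route actually establishes the corollary for every $s>1$ (the residual constraint coming only from Theorem \ref{T:local} itself), a mild strengthening of the stated result. The only cosmetic caveat is that your displayed inequality chain $\rho(\kappa+1)\leq\frac{\sqrt{s}+3}{2}\leq\frac{1}{2c}$ has the logic slightly compressed (the second inequality \emph{is} the $c$-bound rather than something it "reduces to"), but the estimates themselves are all valid.
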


\begin{proof}
Note that for any $s$-sparse $\x_0$, 
\begin{align}\label{eq:krho-uniform}
  \kappa &\leq \frac{(\sqrt{s}+1)}{2}, & \rho &\leq 1.
\end{align}
\textcolor{black}{Indeed, since $\kappa (\x_0)$ is invariant under both scaling and permutation, we may assume $\x_0 = (x_1, \cdots, x_s, 0, \cdots, 0)^T$ with $x_1\geq\cdots\geq x_s\geq 0$ and $\|\x_0\|_2=1$. 
In this case, $\kappa (\x_0) = x_1^2 + x_1(\sum_{i = 2}^s x_i)\leq x_1^2 + x_1\sqrt{(s-1)(1-x_1^2)}$, with equality achieved at $x_2 = \cdots = x_s$. 
The maximum of the right-hand side is $(\sqrt{s}+1)/2$ and is attained when $x_1 = 1/2 + 1/(2\sqrt{s})$. }

Since $\A$ satisfies the $(s, \frac{1}{\sqrt{s}+2})$-NSP, then 
\begin{align}\label{002}
  \|\h_T\|_1<\frac{1}{\sqrt{s}+2}\|\h_{T^\complement}\|_1\ \ \ \forall (\h, T) \in \ker(\A) \times [n]_s, 
\end{align}
which combines with \eqref{eq:krho-uniform} to establish \eqref{k-NSP}.
Now we note that if
\begin{align}
c\leq\frac{1}{\sqrt{s}+3}. \label{001}
\end{align}
is achieved for all $s$-sparse vectors, then this along with \eqref{eq:krho-uniform} implies \eqref{ratio}. We claim that \eqref{002} implies \eqref{001} for $s>6$: Let $\h \in \ker(\A)$ and note that \eqref{002} implies that
\begin{align*}
  \|\h\|^2_1 &\geq (\sqrt{s}+3)^2\|\h_T\|_1^2, & \h &\in \ker(\A).
\end{align*}
For $r \in \N$, let $T_r$ be the support of the $((r-1)s+1)$-th to the $rs$-th components of $\h$ arranged in decreasing magnitude. This partition ensures that
\begin{align}\label{eq:h-partition}
  \|\h_{T_r}\|_\infty \leq \min_{i \in T_{r-1}} |h_i| \leq \frac{1}{s} \sum_{i \in T_{r-1}} |h_i| = \frac{1}{s} \| \h_{T_{r-1}}\|_1.
\end{align}
Then applying a block-type argument, for $s>6$,  
\begin{align*}
\|\h\|^2_2= \|\h_{T_1}\|^2_2+\sum_{r\geq 2}\|\h_{T_r}\|^2_2&\leq \frac{1}{(\sqrt{s}+3)^2}\|\h\|_1^2+\sum_{r\geq 2}\|\h_{T_r}\|^2_2\\
&\stackrel{\eqref{eq:h-partition}}{\leq} \frac{1}{(\sqrt{s}+3)^2}\|\h\|_1^2+\sum_{r\geq 2} \sum_{i=1}^s \left( \frac{\|\h_{T_{r-1}}\|_1}{s}\right)^2\\
&\leq \frac{1}{(\sqrt{s}+3)^2}\|\h\|_1^2+ \frac{1}{s} \sum_{r\geq 1} \|\h_{T_{r}}\|^2_1\\
&= \frac{1}{(\sqrt{s}+3)^2}\|\h\|_1^2 + \frac{1}{s} \|\h\|_1^2 \leq \frac{1}{\sqrt{s}+3}\|\h\|_1^2.
\end{align*}
We have thus established both \eqref{ratio} and \eqref{k-NSP}, so that the conclusion of Theorem \ref{T:local} holds.
\end{proof}

A similar technique does not, unfortunately, provide a uniform bound on the local convergence radius due to technical issues.
Without asking for uniformity of the local convergence radius, Corollary \ref{improved-local} gives an asymptotic weaker condition for local optimality of sparse vectors compared to the result in \cite{Rahimi_2019}, which requires a stronger NSP with parameters $(s, \frac{1}{s+1})$ for $s\geq 1$. In fact, in many situations of interest, $\kappa$ is of mild order (which is specified in the following proposition), suggesting that (\ref{k-NSP}) is not as restrictive as in Corollary \ref{improved-local}. 

\begin{Prop}
\textcolor{black}{Suppose that the $s$ nonzero components of $\x_0$ are iid with the same distribution as a scalar centered sub-gaussian random variable $X$.  
Then, for sufficiently large $s$, the following event holds with probability at least $1-1/s^2$:
\begin{align*}
  \kappa(\x_0) \leq C\sqrt{\log s},
\end{align*} 
where $C$ is a constant depending only on the sub-gaussian norm of $X/\sqrt{\E X^2}$.}
\end{Prop}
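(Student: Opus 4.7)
The plan is to bound the three quantities $\|\x_0\|_1$, $\|\x_0\|_\infty$, and $\|\x_0\|_2^2$ separately via standard sub-gaussian and sub-exponential tail inequalities, and then combine them by a union bound. Writing the nonzero coordinates as iid copies $X_1,\ldots,X_s$ of a centered sub-gaussian random variable $X$, I may assume by the scale invariance of $\kappa$ that $\E X^2 = 1$; then $X$ itself has sub-gaussian norm $K$ (the quantity on which the constant $C$ in the statement is allowed to depend), and $\E|X|\leq C_0 K$ for an absolute constant $C_0$.

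First I would show $\|\x_0\|_1 \leq 2s\,\E|X|$ with exponentially small failure probability: since $|X_i|-\E|X_i|$ is centered sub-gaussian with norm $\lesssim K$, Hoeffding's inequality for sub-gaussian sums gives failure probability at most $2\exp(-cs/K^2)$. Second, $\|\x_0\|_\infty \leq c_1 K\sqrt{\log s}$ follows from a union bound: each $|X_i|$ satisfies $\P(|X_i|>u)\leq 2\exp(-cu^2/K^2)$, so choosing $u=c_1 K\sqrt{\log s}$ with $c_1$ sufficiently large (in terms of $K$) yields failure probability at most $2/s^2$. Third, $\|\x_0\|_2^2 \geq s/2$: the variables $X_i^2-1$ are centered sub-exponential with sub-exponential norm $\lesssim K^2$, so Bernstein's inequality with deviation $t=s/2$ gives failure probability at most $2\exp(-cs/K^4)$.

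Combining the three estimates by a union bound, for $s$ sufficiently large (so that the two exponentially-decaying failure probabilities are each bounded by $1/(4s^2)$), all three inequalities hold simultaneously with probability at least $1-1/s^2$. On this event,
\begin{align*}
\kappa(\x_0) = \frac{\|\x_0\|_1\,\|\x_0\|_\infty}{\|\x_0\|_2^2} \leq \frac{(2s\,\E|X|)\cdot(c_1 K\sqrt{\log s})}{s/2} = C\sqrt{\log s},
\end{align*}
with $C$ depending only on $K$, as desired.

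The main obstacle is bookkeeping rather than a conceptual issue: the only step that produces a polynomially- (rather than exponentially-) decaying failure probability is the $\ell_\infty$ bound, and obtaining the rate $1/s^2$ instead of $1/s$ just requires taking the union-bound constant $c_1$ slightly larger than the naive value. The sub-gaussian/sub-exponential toolbox applies directly and each of the three bounds is a textbook application, so I anticipate no genuine difficulty beyond a careful choice of constants.
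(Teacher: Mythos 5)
Your proposal is correct and follows essentially the same route as the paper: normalize to $\E X^2=1$ by scale invariance, control $\|\x_0\|_1$ by Hoeffding, $\|\x_0\|_2^2$ by Bernstein, and $\|\x_0\|_\infty$ by a sub-gaussian maximal/union bound at level $\asymp\sqrt{\log s}$, then combine via a union bound, with the $\ell_\infty$ term being the only polynomially-decaying contribution. The only difference is cosmetic bookkeeping of the constants (the paper allots $1/(3s^2)$ to each of the three events), which you already flag and resolve by enlarging the union-bound constant.
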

\begin{proof}
\textcolor{black}{Since $\kappa$ is scale-invariant, we may assume that $\E X^2=1$, i.e., $\E|X|\leq 1$ and $\text{Var} X\leq 1$. 
Denote the sub-gaussian norm (see Definition \ref{def:subg}) of $X$ as $\sigma_X$.
Applying Hoeffding's inequality to $\|\x_0\|_1$, Bernstein's inequality to $\|\x_0\|_2^2$ and the maximal inequality \cite[Theorem 1.14]{rigollet2015high} to $\|\x_0\|_\infty$ yields that for sufficiently large $s$, 
\begin{align*}
\P\left(\|\x_0\|_1>2 s\right)&\leq \P\left(\|\x_0\|_1-\E\|\x_0\|_1>s\E|X|\right)\leq 2e^{-cs}\leq 1/3s^2\\
\P\left(\|\x_0\|^2_2<s/2\right)&\leq \P\left(\|\x_0\|^2_2 - \E \|\x_0\|^2_2\leq -s\E X^2/2\right)\leq 2e^{-cs}\leq 1/3s^2\\
\P\left(\|\x_0\|_\infty> 4\sigma_X\sqrt{\log s}\right)& \leq  \P\left(\|\x_0 -\E\x_0\|_\infty > 3\sigma_X\sqrt{\log s}\right)\leq 1/3s^2, 
\end{align*}
where $c$ is a constant depending only on $\sigma_X$.
It follows from a union bound that 
\begin{align*}
\P\left(\kappa (\x_0)\leq 16\sigma_X\sqrt{\log s}\right)\geq 1-1/s^2.
\end{align*}
The proof is complete.}
\end{proof}

We now provide the proof of Theorem \ref{T:local}:

\begin{proof}[Proof of Theorem \ref{T:local}]
The main idea of the proof is that under condition \eqref{k-NSP}, a large proportion of the perturbation of $\x_0$ by $\h\in\ker(\A)$ will be well-spread outside the support of $\x_0$, which necessarily increases the value of the objective $\ell_1/\ell_2$. 

For any $\x, \y \neq 0$, define the ordering $\x\succcurlyeq (\succ) \y$ as $\frac{\|\x\|_1}{\|\x\|_2}\geq (>) \frac{\|\y\|_1}{\|\y\|_2}$. For simplicity and without loss of generality, we assume the nonzero entries of $\x_0$ are arranged in order of decreasing magnitude in the first $s$ components, i.e., that $\x_0=(x_1, \cdots, x_s, 0, \cdots, 0)^T\in\R^n$ with $|x_1|\geq \cdots\geq |x_s|>0$. Define $\delta \coloneqq \rho \|\x_0\|_\infty = |x_s| > 0$.

We claim that for any $\h=(h_1, \cdots, h_n)^T\in\ker(\A)$ with $\|\h\|_1\leq\delta$, the perturbed vector $\x$ satisfies
\begin{align*}
\x \coloneqq \x_0+\h=(x_1+h_1, \cdots, x_s+h_s, h_{s+1}, \cdots, h_n)^T\succ \x_0, 
\end{align*} 
which would prove the desired result. To show the above relation, we will construct another vector $\x'$ from $\x$ and establish the ordering,
\begin{align}\label{eq:porder}
  \x \succcurlyeq \x' \succcurlyeq \x_0.
\end{align}
To begin, introduce $\beta =\sum_{i=1}^s \text{sgn}(x_i)h_i$ and $\gamma =\sum_{i=1}^s|h_i|$ , and augment entries $1$ and $s$ in $\x$ to obtain 
\begin{align*}
\x' \coloneqq \left(x_1+\text{sgn}(x_1)\frac{\gamma+\beta}{2}, x_2, \cdots, x_{s-1}, x_s-\text{sgn}(x_s)\frac{\gamma-\beta}{2}, h_{s+1}, \cdots, h_n\right)^T.
\end{align*}
Note that since $|\beta| \leq \gamma$, then 
  \begin{align}\nonumber
    \|\x'\|_1 &= \left| x_1 + \sgn(x_1) \frac{\gamma + \beta}{2}\right| + \left| x_s - \sgn(x_s) \frac{\gamma - \beta}{2}\right| + \sum_{j=2}^{s-1} |x_j| + \sum_{j=s+1}^n |h_j| \\\nonumber 
              &= \beta + \sum_{j=1}^s |x_j| + \sum_{j=s+1}^n |h_j| = \sum_{j=1}^s |x_j| + \sgn(x_j) h_j + \sum_{j=s+1}^n |h_j| \\\label{eq:xpx1}
              &= \sum_{j=1}^s \left| x_j + h_i \right| + \sum_{j=s+1}^n | h_j| = \| \x\|_1,
  \end{align}
where the penultimate equality uses the fact that the assumption $\|\h\|_1 \leq \delta$ implies $|h_i| \leq |x_i|$ for $i = 1, \ldots, s$.
To show that $\|\x'\|_2\geq \|\x\|_2$, we express the difference of their squares as
\begin{subequations}\label{eq:xpx2}
\begin{equation}
  \|\x'\|_2^2-\|\x\|_2^2 = \underbrace{(\gamma+\beta)|x_1|-(\gamma-\beta)|x_s| -2\sum_{i=1}^s\text{sgn}(x_ih_i)|x_i\|h_i|}_{(A)} + \underbrace{\frac{1}{2}(\gamma^2+\beta^2)-\sum_{i=1}^s h_i^2}_{(B)}
\end{equation}
Term (A) satisfies
\begin{equation}
  (A) = 2\sum_{\text{sgn}(x_ih_i)=1}(|x_1|-|x_i|)|h_i|+2\sum_{\text{sgn}(x_ih_i)=-1}(|x_i|-|x_s|)|h_i| \geq 0,
\end{equation}
and term (B)
\begin{align}\nonumber
  (B) &= \frac{1}{2}\left(2\sum_{i=1}^s h_i^2+2\sum_{1\leq i<j\leq s}\left(|h_i\|h_j|-\text{sgn}(x_ix_j)h_ih_j\right)\right)-\sum_{i=1}^sh_i^2 \\
      &= \sum_{1\leq i<j\leq s}\left(|h_i\|h_j|-\text{sgn}(x_ix_j)h_ih_j\right) \geq 0.
\end{align}
\end{subequations}
Relations \eqref{eq:xpx1} and \eqref{eq:xpx2} establish the upper ordering in \eqref{eq:porder}.
To show the lower ordering, we first note that Taylor's Theorem applied to the function $y \mapsto \sqrt{y}$ along with that function's concavity implies that for any $\beta > 0$:
\begin{align}\label{eq:taylor-temp}
  \sqrt{\|\x_0\|_2^2 + \beta} \leq \|\x_0\|_2 + \frac{1}{2 \|\x_0\|_2} \beta.
\end{align}
Now we directly compare the $\ell_1/\ell_2$ norms of $\x_0$ and $\x'$:
\begin{align*}
\frac{\|\x'\|_1}{\|\x'\|_2}&=\frac{\|\x_0\|_1+\beta+\sum_{i=s+1}^n|h_i|}{\sqrt{\|\x_0\|_2^2+(\gamma+\beta)|x_1|-(\gamma-\beta)|x_s|+\frac{1}{2}(\gamma^2+\beta^2)+\sum_{i=s+1}^nh_i^2}}\\
                           &\stackrel{|\beta|\leq \gamma}{\geq}\frac{\|\x_0\|_1-\gamma+\sum_{i=s+1}^n|h_i|}{\sqrt{\|\x_0\|_2^2+2\gamma |x_1|+\gamma^2+\|\h\|_2^2}}\\
                           &\stackrel{\eqref{eq:taylor-temp}}{\geq}\frac{\|\x_0\|_1-\gamma+\sum_{i=s+1}^n|h_i|}{\|\x_0\|_2+\frac{1}{2\|\x_0\|_2}(2\gamma |x_1|+\gamma^2+\|\h\|_2^2)}\\
                           &\leftstackrel{\eqref{k-NSP},\eqref{eq:dynamic-range},\eqref{eq:c-def}}{>}\frac{\|\x_0\|_1+\frac{2\kappa}{2\kappa+2}\|\h\|_1}{\|\x_0\|_2+\frac{1}{2\|\x_0\|_2}(\frac{2}{2\kappa+2} \|\x_0\|_\infty\|\h\|_1+\frac{\rho}{(2\kappa+2)^2}\|\x\|_\infty\|\h\|_1+c\rho\|\x\|_\infty\|\h\|_1)}\\
&\geq\min\left(\frac{\|\x_0\|_1}{\|\x_0\|_2}, \frac{\frac{4\kappa}{2\kappa+2}}{\frac{2}{2\kappa+2}+\frac{\rho}{(2\kappa+2)^2}+c\rho}\frac{\|\x_0\|_2}{\|\x_0\|_\infty}\right)\\
&\stackrel{\eqref{kappa}}{\geq}\min\left(1, \frac{\frac{4}{2\kappa+2}}{\frac{2}{2\kappa+2}+\frac{\rho}{(2\kappa+2)^2}+c\rho}\right)\frac{\|\x_0\|_1}{\|\x_0\|_2}\\
&\stackrel{\eqref{ratio}}{=}\frac{\|\x_0\|_1}{\|\x_0\|_2}.  
\end{align*}

We have thus established the lower relation in \eqref{eq:porder} and the proof is complete.   
\end{proof}

\begin{Rem}
From Theorem \ref{T:local}, we notice that  (\ref{ratio}) is automatically satisfied if $\rho<1/(\sqrt{s}+3)$. This suggests that the local optimality criteria is more likely to hold for vectors with a larger dynamic range, which is a possible explanation for why large dynamic range aids in the numerical performance of $\ell_1/\ell_2$ algorithms \cite{Wang_2020}. We will also numerically investigate this in Section \ref{numerical experiments}.   
\end{Rem}
\textcolor{black}{
\begin{Rem}
It is possible to derive a sufficient and necessary condition for an $s$-sparse solution $\x_0$ to be a local minimum by taking directional derivatives, which is the approach taken in \cite{Rahimi_2019}. 
Without loss of generality we assume that $\|\x_0\|_2 = 1$. 
For $\h\in\ker(\A)$, consider the function $L_\h(t) = \|\x_0+t\h\|_1/\|\x_0+t\h\|_2, t\geq 0$. 
$\x_0$ is a local minimizer of the $\ell_1/\ell_2$ minimization subject to $\A\x=\A\x_0$ if and only if $L'_\h(0)\geq 0$ for all $\h\in\ker(\A)$, and this is equivalent to the following condition:
\begin{align*}
&\|\h_{S^\complement}\|_1\geq\langle (\|\x_0\|_1(\x_0)_S-\sgn((\x_0)_S), \h_S\rangle& \forall 0\neq \h\in\ker(\A),
\end{align*}
where $S$ is the support of $\x_0$. 
An unconditional upper bound on the right-hand side (independent of $\x_0$) is $(\sqrt{s}+1)\|\h_S\|_1$, from which one can deduce that the NSP with parameters $(s, \frac{1}{\sqrt{s}+1})$ is sufficient to guarantee the uniform local optimality of $s$-sparse signals. This implies the uniform local optimality result in Corollary \ref{improved-local}. 
However, the NSP condition is not tight since the equality cannot be attained, i.e., $\langle\sgn((\x_0)_S), \h_S\rangle = \|\h_S\|_1$ implies $\langle \|\x_0\|_1(\x_0)_S, \h_S\rangle< 0$.
As an alternative, Theorem \ref{T:local} makes an attempt to understand the local optimality of a given sparse signal based on signal-dependent structure. 
\end{Rem}
}

\section{A sufficient condition for exact recovery}\label{exact recovery}

In this section, we propose a sufficient condition that guarantees the uniform exact recovery for sparse vectors using $\ell_1/\ell_2$. As we will see, the condition to be obtained will hold with overwhelming probability for a large class of sub-gaussian random matrices. Since our approach for deriving this recoverability condition applies to other situations as well, we consider the following problem which is slightly more general than (\ref{l1/l2}): For $0<q\leq 1$ fixed, consider the optimization
\begin{align}
\min \frac{\|\x\|_q}{\|\x\|_2} \ \ \ \text{subject to}\ \A\x=\b.\label{q-opt}
\end{align} 
We will refer to \eqref{q-opt} as the $\ell_q/\ell_2$ minimization problem. The problem \eqref{q-opt} is equivalent to \eqref{l1/l2} when $q=1$. Clearly, (\ref{q-opt}) recovers all $s$-sparse vectors if and only if for any nonzero $\h\in \ker(\A)$ and $\x_0$ with $\|\x_0\|_0\leq s$,
\begin{align}
\frac{\|\x_0\|^q_q}{\|\x_0\|^q_2}<\frac{\|\x_0+\h\|^q_q}{\|\x_0+\h\|^q_2}.\label{9}
\end{align}   
We will now directly work with (\ref{9}) to find a sufficient condition establishing this property. 
\textcolor{black}{As we will see, the condition to be obtained from our analysis is strictly sufficient and \emph{stronger} than the NSP assumption used in the-state-of-art $\ell_q$ recovery (in particular with $q=1$). This is mainly due to the technical difficulty in analyzing the ratio form in a uniform way.} The main exact recovery result is as follows.

\begin{Th}[Uniform recoverability]\label{T5}
  If, for some $s \in \N$, the matrix $\A$ satisfies 
\begin{align}
\inf_{\h\in\ker(\A)\backslash\{\mathbf{0}\}}\frac{\|\h\|_q}{\|\h\|_2}>3^{1/q}s^{1/q-1/2}.\label{exact-rc}
\end{align}
then \eqref{9} holds, establishing exact recovery for the optimization \eqref{q-opt}.
\end{Th}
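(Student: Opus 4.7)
The plan is to directly verify the ratio inequality \eqref{9} using two tools: the $q$-subadditivity $\|a+b\|_q^q \le \|a\|_q^q + \|b\|_q^q$ that is valid for $0 < q \le 1$, and the sparsity-to-$\ell_2$ bound $\|\y\|_q \le |\text{supp}(\y)|^{1/q - 1/2}\|\y\|_2$ which follows from Cauchy--Schwarz. Throughout, I let $T = \text{supp}(\x_0)$, so $|T| \le s$, and fix a nonzero $\h \in \ker(\A)$.

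For the numerator side, I split $\|\x_0+\h\|_q^q = \|\x_0+\h_T\|_q^q + \|\h_{T^\complement}\|_q^q$ since the two summands have disjoint support, then apply $q$-subadditivity in the reverse form $\|\x_0+\h_T\|_q^q \ge \|\x_0\|_q^q - \|\h_T\|_q^q$. Since $\|\h_{T^\complement}\|_q^q = \|\h\|_q^q - \|\h_T\|_q^q$, I obtain
\begin{align*}
\|\x_0+\h\|_q^q \;\ge\; \|\x_0\|_q^q + \|\h\|_q^q - 2\|\h_T\|_q^q.
\end{align*}
For the denominator side, the ordinary triangle inequality combined with $(a+b)^q \le a^q + b^q$ (for $a,b \ge 0$, $0 < q \le 1$) gives $\|\x_0+\h\|_2^q \le \|\x_0\|_2^q + \|\h\|_2^q$.

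Next I rearrange \eqref{9} into the equivalent cross-multiplied form $\|\x_0\|_q^q\|\x_0+\h\|_2^q < \|\x_0\|_2^q\|\x_0+\h\|_q^q$ and substitute the two bounds above. After cancellation of the common term $\|\x_0\|_q^q\|\x_0\|_2^q$, it suffices to establish
\begin{align*}
\|\x_0\|_q^q\|\h\|_2^q \;<\; \|\x_0\|_2^q\bigl(\|\h\|_q^q - 2\|\h_T\|_q^q\bigr).
\end{align*}
Now I invoke the sparsity-to-$\ell_2$ bound twice: $\|\x_0\|_q^q \le s^{1-q/2}\|\x_0\|_2^q$ because $\x_0$ is $s$-sparse, and $\|\h_T\|_q^q \le s^{1-q/2}\|\h_T\|_2^q \le s^{1-q/2}\|\h\|_2^q$ because $|T| \le s$. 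Substituting these and dividing through by $\|\x_0\|_2^q$ reduces the required inequality to $\|\h\|_q^q > 3 s^{1-q/2}\|\h\|_2^q$, which is exactly the hypothesis \eqref{exact-rc} after taking $q$-th roots.

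There is no serious obstacle; the argument is a careful bookkeeping exercise chaining $q$-subadditivity on the numerator, ordinary-plus-$q$ triangle on the denominator, and the sparse Cauchy--Schwarz bound applied on both $\x_0$ and $\h_T$. The only things to watch for are (i) the factor of $3 = 1 + 2$ arising from the $\|\x_0\|_q^q$ term contributing one copy of $s^{1-q/2}\|\h\|_2^q$ and the $2\|\h_T\|_q^q$ term contributing the other two copies, and (ii) that strict inequality in \eqref{exact-rc} is preserved throughout since $\h \neq \mathbf{0}$ and $\x_0 \neq \mathbf{0}$ (the case $\x_0 = \mathbf{0}$ is vacuous for the recovery statement).
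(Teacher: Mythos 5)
Your proof is correct and follows essentially the same route as the paper: the identical numerator bound $\|\x_0+\h\|_q^q \ge \|\x_0\|_q^q + \|\h\|_q^q - 2\|\h_T\|_q^q$, the identical denominator bound $\|\x_0+\h\|_2^q \le \|\x_0\|_2^q + \|\h\|_2^q$, and the same double application of the sparse H\"older bound (once to $\x_0$, once to $\h_T$) producing the factor $3 = 1+2$. The only cosmetic difference is that you cross-multiply and cancel where the paper uses the mediant inequality $\frac{a+c}{b+d}\ge\min\{a/b,\,c/d\}$; both reduce to the same sufficient condition $\|\h\|_q^q > 3s^{1-q/2}\|\h\|_2^q$.
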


Condition (\ref{exact-rc}) should be compared to (\ref{zhang-l1-l2}) in Section \ref{sec:cs}. When $q=1$, the right-hand side of (\ref{exact-rc}) is $3\sqrt{s}$, which is slightly larger (worse) than $2\sqrt{s}$ in (\ref{zhang-l1-l2}). When $\A$ is Gaussian, (\ref{KGG}) ensures that (\ref{exact-rc}) is satisfied with overwhelming probability provided that $m$ behaves linearly in $s$ (up to a logarithmic factor). The next theorem generalizes (\ref{KGG}) to the class of isotropic sub-gaussian matrices, but with a slightly worse logarithmic factor. For convenience, we first recall the definition of sub-gaussian and isotropic vectors.

\begin{Def}[Sub-gaussian random vectors]\label{def:subg}
  A random vector $X\in\R^n$ is said to be sub-gaussian if its one-dimensional marginal $\a^TX$ is sub-gaussian for all $\a\in\R^n$. In other words, $X$ is sub-gaussian if $\|\a^TX\|_{\Psi_2} \coloneqq \inf\{t>0, \E[e^{|\a^TX|^2/t^2}]\leq 2\}<\infty$ for all $\a\in\R^n$, where the sub-gaussian norm $\|\cdot\|_{\Psi_2}$ of $X$ is defined as 
\begin{align*}
\|X\|_{\Psi_2}=\sup_{\|\a\|_2=1}\|\a^TX\|_{\Psi_2}. 
\end{align*} 
\end{Def} 
If $X$ is standard normal, all of its marginals are standard normal in 1D, therefore $\|X\|_{\Psi_2} = \|\mathcal{N}(0,1)\|_{\Psi_2}$.

\begin{Def}[Isotropic random vectors]
A random vector $X\in\R^n$ is said to be isotropic if $\E[XX^T]=\mathbf{I}_n$, where $\mathbf{I}_n$ is the identity matrix. 
\end{Def}

\begin{Th}[Uniform recoverability for sub-gaussian matrices]\label{T4}
Let $\A\in\R^{m\times n}$ be a random matrix whose rows are independent, isotropic and sub-gaussian random vectors in $\R^n$. Suppose that $s$ satisfies, 
\begin{align}\label{eq:ms-sub-gaussian}
  \frac{m}{s} > D F^4 u \log n,
\end{align}
where $F$ is the maximum sub-gaussian norm of rows of $\A$, $u \geq 1$, and $D$ is an absolute constant. Then, for $q=1$, \eqref{exact-rc} holds with probability at least $1-2e^{-u}$.
\end{Th}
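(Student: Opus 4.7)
The plan is to reduce the inequality \eqref{exact-rc} with $q=1$ to a bound on the smallest singular value of $\A$ restricted to a specific subset of the unit sphere, and then invoke a sub-gaussian matrix deviation inequality. Specifically, by $\ell_2$-homogeneity, \eqref{exact-rc} holds if and only if for every $\h \in \ker(\A) \setminus \{\mathbf{0}\}$ with $\|\h\|_2 = 1$ we have $\|\h\|_1 > 3\sqrt{s}$. Introducing the set
\begin{align*}
  T_s \coloneqq \bigl\{\h \in \R^n : \|\h\|_2 = 1,\; \|\h\|_1 \leq 3\sqrt{s}\bigr\} \subset S^{n-1},
\end{align*}
this is equivalent to $\ker(\A) \cap T_s = \emptyset$. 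Since any $\h \in \ker(\A)$ satisfies $\|\A\h\|_2 = 0$, it suffices to show that $\inf_{\h \in T_s}\|\A\h\|_2 > 0$ with probability at least $1 - 2e^{-u}$.

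Next I would control the Gaussian width $w(T_s) = \E \sup_{\h\in T_s}\langle \g, \h\rangle$ of $T_s$. By Hölder's inequality, $\langle \g, \h\rangle \leq \|\g\|_\infty \|\h\|_1 \leq 3\sqrt{s}\,\|\g\|_\infty$ for every $\h \in T_s$, so using \eqref{l1-ball:gauss-width} (which gives $\E\|\g\|_\infty \lesssim \sqrt{\log n}$) one obtains
\begin{align*}
  w(T_s) \;\leq\; 3\sqrt{s}\,\E\|\g\|_\infty \;\lesssim\; \sqrt{s \log n}.
\end{align*}

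The final step is to apply a sub-gaussian analogue of the $M^*$-bound, such as the matrix deviation inequality of Liaw--Mehrabian--Plan--Vershynin (or its variant in Vershynin's \emph{High-Dimensional Probability}): for a random matrix $\A \in \R^{m\times n}$ with independent isotropic sub-gaussian rows of sub-gaussian norm at most $F$, any bounded $T \subset \R^n$, and any $t \geq 0$,
\begin{align*}
  \inf_{\h \in T} \|\A\h\|_2 \;\geq\; \sqrt{m} - C F^2 \bigl(w(T) + t\cdot\mathrm{rad}(T)\bigr)
\end{align*}
holds with probability at least $1 - 2 e^{-t^2}$, where $\mathrm{rad}(T) = \sup_{\h\in T}\|\h\|_2$ and $C$ is absolute. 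Since $\mathrm{rad}(T_s) = 1$, setting $t = \sqrt{u}$ and combining with the width bound above yields, with probability $\geq 1 - 2e^{-u}$,
\begin{align*}
  \inf_{\h \in T_s}\|\A\h\|_2 \;\geq\; \sqrt{m} - C'F^2\bigl(\sqrt{s\log n} + \sqrt{u}\bigr).
\end{align*}
For a sufficiently large absolute constant $D$, the hypothesis $m > D F^4 u s \log n$ (which implies $m \gtrsim F^4(s\log n + u)$ since $u \geq 1$) makes the right-hand side strictly positive, completing the argument.

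The main obstacle is step three: choosing and correctly calibrating the sub-gaussian matrix deviation inequality so that the tail probability takes the exact form $2e^{-u}$ while keeping the dependence on $F$ to $F^4$ and absorbing the additive $u$ into the $s\log n$ factor demanded by \eqref{eq:ms-sub-gaussian}. Everything else—reduction to an infimum over $T_s$ and the Gaussian-width estimate—is routine, but this calibration step is where a clean sub-gaussian analogue of Gordon's escape theorem (as opposed to the purely Gaussian Theorem \ref{T2} available in the text) must be pulled from the literature and the constants tracked with care.
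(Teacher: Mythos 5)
Your argument is correct, and it rests on the same key lemma as the paper's proof---the sub-gaussian matrix deviation inequality (Exercise 9.13 in \cite{Vershynin_2018}) combined with the Gaussian-width bound \eqref{l1-ball:gauss-width}---but you deploy it in the dual, escape-through-a-mesh form rather than the $M^*$-bound form. The paper applies the two-sided deviation bound to $T=\ker(\A)\cap B_1^n$: since $\|\A\x\|_2=0$ on that set, the inequality collapses to $\sqrt{m}\,\sup_{\x\in T}\|\x\|_2\leq cF^2\bigl(\gamma(T)+u\cdot\mathrm{rad}(T)\bigr)$ with $\gamma(T),\mathrm{rad}(T)\lesssim\sqrt{\log n}$, and inverting gives a lower bound on $\inf_{\h\in\ker(\A)}\|\h\|_1/\|\h\|_2$ of order $\sqrt{m/\log n}/(F^2u)$, which is then compared against $3\sqrt{s}$. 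You instead lower-bound $\inf_{\h\in T_s}\|\A\h\|_2$ over the ``bad set'' $T_s$ of unit vectors with $\|\h\|_1\leq 3\sqrt{s}$, whose width is $\lesssim\sqrt{s\log n}$, and conclude the kernel must avoid $T_s$. The two computations are essentially transposes of one another: the factor $3\sqrt{s}$ enters your width estimate up front rather than in the final comparison, and both calibrate the tail by substituting $t=\sqrt{u}$ (the paper's ``renaming $u^2$ by $u$''). Your version has the minor convenience that $\mathrm{rad}(T_s)=1$, so no separate diameter estimate is needed; the paper's version yields the slightly stronger quantitative conclusion that the ratio itself is $\gtrsim\sqrt{m/\log n}/(F^2u)$, not merely that it exceeds $3\sqrt{s}$. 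Your final bookkeeping---absorbing the additive $u$ into $u\,s\log n$ via $u\geq 1$ and $s\log n\geq 1$---is sound, and the symmetry of $T_s$ justifies replacing $\gamma(T_s)$ by $w(T_s)$ in the deviation inequality.
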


The class of sub-gaussian random matrices considered in Theorem \ref{T4} is general enough for practical purposes: Gaussian, symmetric Bernoulli, and many other random matrices whose entries are sampled from bounded distributions with appropriate centralization and normalization fall into this realm.

We now provide the proof of Theorem \ref{T5}. 

\begin{proof}[Proof of Theorem \ref{T5}]
To find a sufficient condition only, we may require the left-hand side of (\ref{9}) to be smaller than a quantity which is unconditionally smaller than the right-hand side of (\ref{9}). First recall the $q$-triangle inequality, which states that for $0<q\leq 1$, 
\begin{align*}
\|\x_1+\x_2\|_q^q\leq \|\x_1\|_q^q+\|\x_2\|_q^q, \ \ \ \forall \x_1, \x_2\in\R^n.
\end{align*}
Fix an $\x_0$ such that $\|\x_0\|_0 \leq s$, and let $S \subset [n]$ denote the support of $\x_0$. The right-hand side of (\ref{9}) satisfies   
\begin{align}
\frac{\|\x_0+\h\|^q_q}{\|\x_0+\h\|^q_2}&\geq\frac{\|(\x_0+\h)_S\|^q_q+\|\h_{S^\complement}\|^q_q}{(\|\x_0\|_2+\|\h\|_2)^q}\nonumber\\
&\geq\frac{\|\x_0\|_q^q+\|\h_{S^\complement}\|^q_q-\|\h_S\|_q^q}{\|\x_0\|^q_2+\|\h\|^q_2},\label{16}
\end{align}
where for the first inequality we used $\|\x_0+\h\|_2\leq \|\x_0\|_2+\|\h\|_2$, and for the second inequality, we used the $q$-triangle inequality for the numerator, $\|(\x_0+\h)_S\|_q^q\geq \|\x_0\|_q^q-\|\h_S\|_q^q$, and concavity of $y \mapsto y^q$ for the denominator, $(\|\x_0\|_2+\|\h\|_2)^q\leq \|\x_0\|_2^q+\|\h\|_2^q$. Continuing, we have
\begin{align*}
  \frac{\|\x_0\|^q_q+\|\h_{S^\complement}\|^q_q-\|\h_S\|^q_q}{\|\x_0\|^q_2+\|\h\|^q_2}  &\geq \min\left\{\frac{\|\x_0\|_q^q}{\|\x_0\|_2^q}, \frac{\|\h_{S^\complement}\|^q_q-\|\h_S\|^q_q}{\|\h\|^q_2}\right\}\\
  & = \min\left\{\frac{\|\x_0\|_q^q}{\|\x_0\|_2^q}, \frac{\|\h\|^q_q-2\|\h_S\|^q_q}{\|\h\|^q_2}\right\}
\end{align*} 
for all nonzero $\h\in\ker(\A)$. Thus, we have established that 
\begin{subequations}\label{eq:thm-T5-temp}
\begin{align}
  \frac{\|\x_0+\h\|^q_q}{\|\x_0+\h\|^q_2} \geq \min\left\{\frac{\|\x_0\|_q^q}{\|\x_0\|_2^q}, \frac{\|\h\|^q_q-2\|\h_S\|^q_q}{\|\h\|^q_2}\right\}, 
\end{align}
where equality holds if and only if $\frac{\|\x_0\|_q^q}{\|\x_0\|_2^q} = \frac{\|\h\|^q_q-2\|\h_S\|^q_q}{\|\h\|^q_2}$. Now by the generalized H\"older's inequality, we have
\begin{align}\label{eq:gen-holder}
  s^{1 - q/2} \geq \frac{\|\x_0\|^q_q}{\|\x_0\|^q_2}
\end{align}
\end{subequations}
which is a sharp estimate since we consider a general $s$-sparse $\x_0$. Therefore the conditions \eqref{eq:thm-T5-temp} in tandem with
\begin{align}
\frac{\|\h\|_q^q-2\|\h_S\|_q^q}{\|\h\|_2^q}>s^{1-q/2}.\label{11}
\end{align}   
are sufficient to conclude Theorem \ref{T5}. In order to prove \eqref{11}, we note that under assumption \eqref{exact-rc} then
\begin{align*}
  \frac{\|\h\|_q^q}{\|\h\|_2^q} &\stackrel{\eqref{exact-rc}}{>} 3 s^{1 - q/2} 
  \stackrel{\eqref{eq:gen-holder}}{\geq} s^{1 - q/2} + 2 \frac{\|\h_S\|_q^q}{\|\h_S\|_2^q} \\
                &\geq  s^{1 - q/2} + 2 \frac{\|\h_S\|_q^q}{\|\h\|_2^q},
\end{align*}
where the second inequality uses the generalized H\"older inequality \eqref{eq:gen-holder} for the $s$-sparse vector $\h_S$. 
This inequality is equivalent to \eqref{11}, proving Theorem \ref{T5}.
\end{proof}

\begin{Rem}
  In the derivation above, the condition \eqref{11} is very different from \eqref{9}. Since \eqref{11} is scale-invariant, it holds for all $\h\in\ker(\A)$ if and only if it holds for $\h\in\ker(\A)\cap K$, where $K$ is any $\mathbf{0}$-starshaped set in $\R^n$. E.g., $K$ can be the unit ball in $\ell_q$. However, for \eqref{9}, it is generally not true that $\|\x_0\|_q/\|\x_0\|_2<\|\x_0+\h\|_q/\|\x_0+\h\|_2$ implies the same inequality for $\h$ replaced by $t \h$, $t \in \R\backslash\{0\}$. This suggests that \eqref{11} is strictly stronger than \eqref{9}.
\end{Rem}

Next we give the proof for Theorem \ref{T4}, based on a matrix deviation inequality inspired by \cite{Vershynin_2018}.   

\begin{proof}
Under the assumption of Theorem \ref{T4}, it follows from \cite[Exercise 9.13]{Vershynin_2018} that for any bounded $T\subset\R^n$ and $u>0$, with probability at least $1-2e^{-u^2}$,  
\begin{align}\label{eq:versh-estimate}
\sup_{\x\in T}\bigg\||\A\x\|_2-\sqrt{m}\|\x\|_2\bigg|\leq cF^2(\gamma(T)+u\cdot\text{rad}(T)),
\end{align}
where 
\begin{align*}
  \gamma(T) \coloneqq \E\sup_{\x\in K}|\langle\g, \x\rangle|
\end{align*}
is a variant of the Gaussian width $w(T)$ from Definition \ref{Gaussian complexity},
$\text{rad}(T)$ is the radius of $T$, $c$ is some absolute constant, and $F=\max_i\|\A_i\|_{\psi_2}$ with $\A_i$ the $i$-th row of $\A$. 

Take $T=\ker(\A)\cap B_1^n$. In this case, $T$ is symmetric with respect to the origin, so we have 
\begin{align*}
  \gamma(T)& = w(T)\leq w(B_1^n) \stackrel{\eqref{l1-ball:gauss-width}}{\leq} \sqrt{8\log n}< 2 \sqrt{\pi \log n},\\
  \text{rad}(T)&=\frac{1}{2}\diam(T)\leq\frac{\sqrt{2\pi}}{2}w(T) \stackrel{\eqref{l1-ball:gauss-width}}{\leq} 2\sqrt{\pi \log n}.
\end{align*}
Using these in \eqref{eq:versh-estimate}, we have the following inequality with probability at least $1-2e^{-u^2}$:
\begin{align}\label{eq:versh-estimate-2}
  \sup_{\x\in\ker(\A)\cap B_1^n}\|\x\|_2\leq 2 cF^2\sqrt{\pi} \left(1+u\right)\sqrt{\frac{\log n}{m}} \leq 4 cF^2 \sqrt{\pi} u \sqrt{\frac{\log n}{m}},
\end{align}
where the last inequality uses $u \geq 1$. Thus, with probability at least $1-2e^{-u^2}$,
\begin{align*}
  \inf_{\h\in\ker(\A)\backslash\{\mathbf{0}\}}\frac{\|\h\|_1}{\|\h\|_2} \stackrel{\eqref{eq:versh-estimate-2}}{\geq} \frac{1}{4 cF^2\sqrt{\pi} u}\sqrt{\frac{m}{\log n}} 
  \stackrel{\eqref{eq:ms-sub-gaussian}}{>} 3 \sqrt{s},
\end{align*}
where we have taken $D = 144 \pi c^2$ in our use of \eqref{eq:ms-sub-gaussian}. Renaming $u^2$ by $u$ leads to the final inequality \eqref{exact-rc}.
\end{proof}

\begin{Rem}
Theorem \ref{T4} only shows that \eqref{exact-rc} is satisfied with high probability for isotropic
subgaussian matrices when $q = 1$. 
It is interesting to know if the same holds true for all $q<1$ and if the corresponding constant will get better.  
The answer to this question is not completely known to us, but there is some evidence that it might be true. 
Indeed, for fixed $q\leq 1$, if $\A$ satisfies
\begin{align}
\inf_{\x\in\ker(\A)\backslash\{\mathbf{0}\}}\frac{\|\x\|_q}{\|\x\|_2}\geq c_q^{1/q}\left(\frac{m}{\log(n/m)+1}\right)^{1/q-1/2}\label{gf}
\end{align}
for some $c_q>0$, then 
\begin{align*}
m\geq \left(\frac{c_q}{3}\right)^{1-q/2}s\log n
\end{align*}
is sufficient for \eqref{exact-rc}. The right-hand side of \eqref{gf} is closely related to the Gelfand's widths of $\ell_p$ balls, see \cite{foucart2010gelfand}. Assuming that $\A$ is a Gaussian random matrix and $n\geq m^2$, a result in \cite{Donoho_2006} states that there exists $c_q>0$ such that \eqref{gf} is satisfied with probability approaching $1$ as $n\to \infty$. However, it is not clear whether the best attainable $c_q$ in \eqref{gf} ensures  $(c_q/3)^{1-q/2}$ as a decreasing function in $q$.  
\end{Rem}

\section{Robustness analysis}\label{stability}

In this section, we discuss the robustness of $\ell_1/\ell_2$ minimization when noise is present. As in other compressed sensing results, we assume that $\b$ is contaminated by some noise $\mathbf{e}$: $\b=\A\x_0+\mathbf{e}\in\R^m$, where $\x_0$ is a sparse vector. If the size (say the $\ell_2$ norm) of $\mathbf{e}$ is bounded by an \textit{a priori} known quantity $\e$, then the $\ell_1/\ell_2$ denoising problem can be stated as 
\begin{align}
\min\frac{\|\x\|_1}{\|\x\|_2} \ \ \ \text{subject to}\ \|\A\x-\b\|_2\leq\e.\label{777}
\end{align}

Let $\x^*$ be a minimizer of (\ref{777}). Then $\|\A\x^*-\A\x_0\|_2\leq 2\e$, and necessarily, 
\begin{align}
\frac{\|\x^*\|_1}{\|\x^*\|_2}\leq\frac{\|\x_0\|_1}{\|\x_0\|_2}\leq\sqrt{s}. \label{key}
\end{align}
This inequality will play an important role in our following discussion. Since $\ell_1/\ell_2$ is scale-invariant, it would be difficult to distinguish $\x^*$ from $\x_0$ when $\x^*$ is nearly parallel to $\x_0$ with similar magnitude. This behavior is quantified in our main robustness result:
\begin{Th}[Robustness]\label{T6}
Let $\x_0\in\R^n$ be an $s$-sparse vector and $\x^*$ be a minimizer of (\ref{777}). Let $\x^*-\x_0=\u+\w$ with $\langle \u, \w\rangle=0$ and assume that 
\begin{align}\label{eq:beta-def}
  \beta \coloneqq 4 \sqrt{2 s} \frac{\| \u\|_2}{\|\u\|_1} < 1.
\end{align}
Let $\alpha \in (\beta, 1)$ be any number. The following holds:
\begin{itemize}
  \item If both of the conditions 
\begin{subequations}\label{eq:cond}
  \begin{align}\label{eq:cond-1}
    \langle \x_0, \x^*\rangle &\geq (1-\alpha^2/2)\|\x_0\|_2\|\x^*\|_2, \\\label{eq:cond-2}
    \|\x_0\|_2 &\leq \|\x^*\|_2 \leq (1+\alpha)\|\x_0\|_2,
  \end{align}
\end{subequations}
hold, then 
\begin{align}
\|\x^*-\x_0\|_2\leq 2\sqrt{\alpha}\|\x_0\|_2. \label{case-relative}
\end{align}  
\item If at least one of \eqref{eq:cond} is violated, then 
\begin{align}
    \|\x^*-\x_0\|_p\leq \frac{2\alpha - \beta}{\alpha - \beta}\|\w\|_p,\label{stab-2-est}
\end{align}
for either $p=1$ or $p=2$.
\end{itemize}

\end{Th}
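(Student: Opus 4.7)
The plan is to treat the two cases in the theorem separately. Case 1 is a short computation in Euclidean geometry, while Case 2 is the substantive one and combines the minimizer inequality (\ref{key}) with a support-based (NSP-style) decomposition of the error $\x^*-\x_0=\u+\w$.

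For Case 1, I would start from the expansion
\begin{equation*}
\|\x^*-\x_0\|_2^2 = \|\x^*\|_2^2 + \|\x_0\|_2^2 - 2\langle \x^*, \x_0\rangle,
\end{equation*}
substitute the inner-product bound (\ref{eq:cond-1}) to rewrite this as $(\|\x^*\|_2-\|\x_0\|_2)^2 + \alpha^2\|\x^*\|_2\|\x_0\|_2$, and then use the two-sided norm bound (\ref{eq:cond-2}) to estimate the first summand by $\alpha^2\|\x_0\|_2^2$ and the second by $\alpha^2(1+\alpha)\|\x_0\|_2^2$. This yields $\|\x^*-\x_0\|_2^2 \leq \alpha^2(2+\alpha)\|\x_0\|_2^2 \leq 3\alpha^2\|\x_0\|_2^2$; the hypothesis $\alpha<1$ then allows replacing $\alpha^2$ by $\alpha$ to obtain $\|\x^*-\x_0\|_2^2 \leq 4\alpha\|\x_0\|_2^2$, which is exactly (\ref{case-relative}). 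No subtlety is involved beyond bookkeeping.

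For Case 2, let $S=\mathrm{supp}(\x_0)$ with $|S|\leq s$. Starting from the minimizer inequality $\|\x^*\|_1/\|\x^*\|_2 \leq \|\x_0\|_1/\|\x_0\|_2$ and the support splitting
\begin{equation*}
\|\x^*\|_1 \;\geq\; \|\x_0\|_1 - \|(\u+\w)_S\|_1 + \|(\u+\w)_{S^\complement}\|_1,
\end{equation*}
together with Cauchy--Schwarz estimates $\|(\u+\w)_S\|_1\leq\sqrt{s}\,\|\u+\w\|_2$ and $\|\x_0\|_1\leq\sqrt{s}\,\|\x_0\|_2$, I would derive a ``master inequality'' that controls $\|(\u+\w)_{S^\complement}\|_1$ by $\|(\u+\w)_S\|_1$ plus an inflation term proportional to $(\|\x^*\|_2-\|\x_0\|_2)$. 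The failure of (\ref{eq:cond-1}) or (\ref{eq:cond-2}) is then used to extract an extra lower bound on either $\|\x^*-\x_0\|_2^2 \geq \alpha^2\|\x^*\|_2\|\x_0\|_2$ (when (\ref{eq:cond-1}) fails) or directly on $|\,\|\x^*\|_2-\|\x_0\|_2\,|$ (when (\ref{eq:cond-2}) fails). Feeding this back into the master inequality and invoking the ``spread'' identity $\|\u\|_2 = \beta\|\u\|_1/(4\sqrt{2s})$ from (\ref{eq:beta-def}) lets me solve for $\|\u\|_p$ in terms of $\|\w\|_p$. The $p=2$ conclusion follows from $\|\u+\w\|_2^2=\|\u\|_2^2+\|\w\|_2^2$ (by $\langle\u,\w\rangle=0$), and the $p=1$ conclusion from the triangle inequality.

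The main obstacle will be unifying the sub-cases that appear inside Case 2 (three distinct failure modes: (\ref{eq:cond-1}) fails, $\|\x^*\|_2<\|\x_0\|_2$, or $\|\x^*\|_2>(1+\alpha)\|\x_0\|_2$) into the single clean bound $(2\alpha-\beta)/(\alpha-\beta)$. The pole at $\alpha=\beta$ of this ratio makes clear why the hypothesis $\alpha>\beta$ is indispensable: it is exactly the quantitative gap between the ``spread'' parameter $\beta$ and the alignment tolerance $\alpha$ that is being used to absorb $\|\u\|_p$ into a multiple of $\|\w\|_p$. The rest is careful bookkeeping to ensure the same constant works for both $p=1$ and $p=2$.
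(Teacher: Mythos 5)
Your Case 1 is correct and is essentially the paper's own computation. Your Case 2 plan is also, at its core, the same strategy the paper uses: exploit the minimizer inequality \eqref{key} and the violation of \eqref{eq:cond} to get an upper bound of order $\sqrt{s}/\alpha$ on the ratio $\|\x^*-\x_0\|_1/\|\x^*-\x_0\|_2$, then play this against the fact that $\u$ has the large ratio $\|\u\|_1/\|\u\|_2 = 4\sqrt{2s}/\beta$ to force $\|\w\|_p$ to dominate $\|\u\|_p$. However, two concretely stated steps in your sketch do not go through as written.

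First, in the failure mode where \eqref{eq:cond-1} is violated you propose to use only $\|\x^*-\x_0\|_2^2 \geq \alpha^2\|\x^*\|_2\|\x_0\|_2$. Combined with $\|\x^*\|_1+\|\x_0\|_1 \leq \sqrt{s}\left(\|\x^*\|_2+\|\x_0\|_2\right)$ (from \eqref{key}), this yields a ratio bound of $\frac{\sqrt{s}}{\alpha}\cdot\frac{\|\x^*\|_2+\|\x_0\|_2}{\sqrt{\|\x^*\|_2\|\x_0\|_2}}$, and the last factor is unbounded when $\|\x^*\|_2$ and $\|\x_0\|_2$ are very different; since your three failure modes are not arranged so that \eqref{eq:cond-2} is available here, this is a genuine hole. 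The fix is to keep the stronger lower bound $\|\x^*-\x_0\|_2^2 \geq \tfrac{\alpha^2}{2}\left(\|\x^*\|_2^2+\|\x_0\|_2^2\right)$, after which $\sqrt{(a^2+b^2)/2}\geq (a+b)/2$ and the mediant inequality give the clean bound $2\sqrt{s}/\alpha$ with no dependence on the relative norms. Second, the step ``solve for $\|\u\|_p$ in terms of $\|\w\|_p$'' is the crux and is left unexecuted. The working mechanism is: after disposing of the trivial case $\|\u\|_p\leq\|\w\|_p$, bound $\frac{\|\x^*-\x_0\|_1}{\|\x^*-\x_0\|_2}\geq \frac{1-v}{\sqrt{1+v^2}}\cdot\frac{\|\u\|_1}{\|\u\|_2}$ with $v=\max\left\{\frac{\|\w\|_1}{\|\u\|_1},\frac{\|\w\|_2}{\|\u\|_2}\right\}$; comparing with the $4\sqrt{s}/\alpha$ upper bound and $\|\u\|_1/\|\u\|_2=4\sqrt{2s}/\beta$ forces $v\geq 1-\beta/\alpha$, hence $\|\u\|_p\leq (1-\beta/\alpha)^{-1}\|\w\|_p$ --- but only for the $p$ attaining the maximum in $v$. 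This is precisely why \eqref{stab-2-est} is claimed for \emph{either} $p=1$ or $p=2$ (at least one of them); your closing aim of making ``the same constant work for both $p=1$ and $p=2$'' asks for more than the statement says and more than this argument delivers. Finally, your logic should account for the branch where the lower bound on the ratio exceeds the upper bound: that is not a contradiction with the hypotheses but rather forces \eqref{eq:cond} to hold, so the conclusion falls back to \eqref{case-relative}; this fallback is absent from your sketch.
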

\textcolor{black}{We provide some intuitive interpretation of the results in Theorem \ref{T6}}.
The conditions \eqref{eq:cond} codify the regime when suboptimal behavior of $\ell_1/\ell_2$ optimization is expected due to a confounding geometric positions of $\x^*$ and $\x_0$.
\textcolor{black}{If $\eqref{eq:cond}$ is violated, then $\x^*$ either points to a different direction than $\x_0$ or has a different magnitude. 
In either case, it can be shown that $\|\x^*-\x_0\|_1/\|\x^*-\x_0\|_2$ is reasonably small. 
This combined with the orthogonal decomposition $\x^*-\x_0=\u+\w$ and the fact that $\u$ has large $\ell_1/\ell_2$ ratio implies that $\|\u\|\ll \|\w\|$ under some appropriate norm. This gives an informal justification for \eqref{stab-2-est}.
}
Note that \eqref{stab-2-est} provides an upper bound on $\|\x^* - \x_0\|_2$ relative to $\|\w\|_1$ regardless of the value of $p$ since $\|\cdot\|_2 \leq \|\cdot\|_1$.
We demonstrate the utility of Theorem \ref{T6} by showing how it can be used to show the robustness of \eqref{777}.
\textcolor{black}{\begin{Cor}\label{cor:C6}
Let $\A \in \R^{m \times n}$ be an isotropic sub-gaussian random matrix, 
and let $\x^*$ be a solution to \eqref{777}. 
  Let $\alpha \in (0,1)$ be such that
  for some $u> \log 5$,
  \begin{align}\label{eq:stab-sample}
   \frac{n}{4}+1\geq m \geq K \frac{u}{\alpha^2}\, s \, \log n
  \end{align}
  where $K$ is a constant depending only the sub-gaussian distribution of $\A$. Then with probability exceeding $1 - 5 e^{-c u}$, for all $s$-sparse vectors $\x_0 \in \R^n$, at least one of the following is true: (i) \eqref{case-relative} is true, or (ii) 
    \begin{align}\label{eq:stab-cor-l2}
    \|\x^*-\x_0\|_p &\leq C_p\e, 
  \end{align}
  is true for either $p=1$ or $p=2$, where $C_1 =  \sqrt{n}C_2$, and $c, C_2$ are constants depending only on the sub-gaussian distribution of $\A$.
\end{Cor}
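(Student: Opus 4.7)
The plan is to apply Theorem \ref{T6} to the orthogonal decomposition $\x^*-\x_0=\u+\w$ with $\u\in\ker(\A)$ and $\w\in\ker^\perp(\A)$, and then convert the bound \eqref{stab-2-est} into \eqref{eq:stab-cor-l2} by controlling $\|\w\|_p$ through the extremal singular values of $\A$. The argument runs on the intersection of two high-probability events depending only on $\A$.

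First, I would establish a scaled version of the null-space bound obtained in Theorem \ref{T4}. Repeating that proof but now requiring the infimum to exceed $8\sqrt{2s}/\alpha$ rather than $3\sqrt{s}$, and tracking the constants in the matrix deviation inequality \eqref{eq:versh-estimate-2}, shows that the sampling condition \eqref{eq:stab-sample} implies
\[
\inf_{\h\in\ker(\A)\setminus\{\mathbf{0}\}}\frac{\|\h\|_1}{\|\h\|_2} > \frac{8\sqrt{2s}}{\alpha}
\]
with probability at least $1-2e^{-cu}$. This is the source of the $\alpha^{-2}$ factor in \eqref{eq:stab-sample}. Second, the assumption $m\leq n/4+1$, together with standard extremal singular value estimates for isotropic sub-gaussian matrices \cite{Vershynin_2018}, gives with probability at least $1-3e^{-cu}$ that $\sigma_{\min}(\A)$ and $\sigma_{\max}(\A)$ are both of order $\sqrt{n}$, so that $\sigma_{\max}(\A)/\sigma_{\min}(\A)^2 \lesssim 1/\sqrt{n}$.

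On the intersection of these events (probability at least $1-5e^{-cu}$ by a union bound), fix any $s$-sparse $\x_0$ and the corresponding minimizer $\x^*$. The degenerate case $\u=\mathbf{0}$ reduces to the $\w$-estimate below. Otherwise, the first event yields $\beta = 4\sqrt{2s}\|\u\|_2/\|\u\|_1 \leq \alpha/2 < 1$, so Theorem \ref{T6} applies. If \eqref{eq:cond} holds, then \eqref{case-relative} is the first alternative of the corollary; otherwise \eqref{stab-2-est} gives $\|\x^*-\x_0\|_p \leq 3\|\w\|_p$ (using $\beta\leq\alpha/2$) for either $p=1$ or $p=2$. Since $\A\u=\mathbf{0}$ and $\w\in\ker^\perp(\A)=\mathrm{range}(\A^T)$, writing $\w = \A^T(\A\A^T)^{-1}\A\w$ with $\|\A\w\|_2 = \|\A(\x^*-\x_0)\|_2 \leq 2\e$ gives
\[
\|\w\|_2 \leq \frac{\sigma_{\max}(\A)}{\sigma_{\min}(\A)^2}\|\A\w\|_2 \lesssim \frac{\e}{\sqrt{n}}, \qquad \|\w\|_1 \leq \sqrt{n}\|\w\|_2 \lesssim \e.
\]
Combining these with $\|\x^*-\x_0\|_p \leq 3\|\w\|_p$ yields \eqref{eq:stab-cor-l2} with $C_2$ depending only on the sub-gaussian distribution of $\A$ and $C_1 = \sqrt{n}C_2$.

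The main obstacle is the careful constant tracking in the first event: one must verify that replacing the target lower bound $3\sqrt{s}$ in Theorem \ref{T4} by $8\sqrt{2s}/\alpha$ produces precisely the scaling $m\gtrsim us\log n/\alpha^2$ in \eqref{eq:stab-sample} and preserves the failure probability of the form $e^{-cu}$ after the $u\mapsto u^2$ renaming used in the proof of Theorem \ref{T4}. The condition $u>\log 5$ is cosmetic, ensuring only that the combined failure probability $5e^{-cu}$ is strictly less than one.
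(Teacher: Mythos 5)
Your proposal is correct and follows essentially the same route as the paper's proof: the orthogonal decomposition $\x^*-\x_0=\u+\w$, a rescaled application of Theorem \ref{T4} to force $\beta\leq\alpha/2$ so that Theorem \ref{T6} yields either \eqref{case-relative} or $\|\x^*-\x_0\|_p\leq 3\|\w\|_p$, a singular-value bound on $\|\w\|_p$ via $\|\A(\x^*-\x_0)\|_2\leq 2\e$, and a union bound over the two events. The only cosmetic difference is that you bound the pseudoinverse by $\sigma_{\max}(\A)/\sigma_{\min}(\A)^2$ (requiring an extra upper estimate on $\sigma_{\max}$) where the paper uses $\|\A^\dagger\|_2=1/\sigma_{\min}(\A)$ directly from the Rudelson--Vershynin smallest-singular-value bound; both land at the same conclusion.
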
}
\begin{proof}
  Decompose $\x^*-\x_0=\u+\w$ with $\u \in \ker(\A)$ and $\w \in \ker(\A)^\perp$. By assumption \eqref{eq:stab-sample}, we can apply Theorem \ref{T4} to conclude that with probability at least $1 - 2 e^{-u}$, we have
  \begin{align}
    \frac{\|\u\|_1}{\|\u\|_2}\geq\frac{8\sqrt{2 s}}{\alpha} \hskip 5pt \Longrightarrow \hskip 5pt \frac{\beta}{\alpha} \leq \frac{1}{2},\label{ne}
  \end{align}
  with $\beta$ as in \eqref{eq:beta-def}. Note in particular that this implies $\beta < 1$. We can now apply Theorem \ref{T6}, so that either \eqref{case-relative} holds (as desired) or \eqref{stab-2-est} holds. We now investigate \eqref{stab-2-est}: 
  \begin{align}\label{eq:C6-temp}
    \|\x-\x_0\|_p\leq \frac{2 \alpha - \beta}{\alpha - \beta} \|\w\|_p \stackrel{\eqref{ne}}{\leq} 3 \|\w\|_p,
  \end{align}
  for either $p = 1$ or $p = 2$. In order to compute bounds for $\|\w\|_p$, we appeal to a well-known result on the lower bound of the singular value of sub-gaussian random matrices: Theorem 1.1 in \cite{Rudelson_2009} shows that the smallest nonzero singular value $\s_{\mathrm{min}}(\A)$ of $\A$ is bounded below by some positive constant with high probability. More precisely, 
  \begin{align}
    \P\left(\s_n(\A)\geq 0.5C\left(1-\sqrt{\frac{m-1}{n}}\right)\right)\geq 1-e^{-cn}-2^{-n-m+1} \geq 1 - 3 e^{-c_1 n},\label{stab-subg}
  \end{align}
  where $c, c_1, C>0$ are absolute constants only depending on the sub-gaussian distribution of $\A$. Since $u < n$ (otherwise \eqref{eq:stab-sample} implies more measurements than unknowns), then the events \eqref{stab-subg} and \eqref{ne} occur simultaneously with probability at least $1 - 5 e^{-c u}$ for some constant $c$. Under this simultaneous event, with $\A^\dagger$ the Moore-Penrose pseudoinverse of $\A$, then 
  \begin{align*}
    \| \w\|_2 &= \| \mathrm{Proj}_{\ker(\A)} (\x_0 - \x^*) \|_2 = \| \A^\dagger \A (\x_0 - \x^*) \|_2 \leq \| \A^\dagger \|_2 \|\A (\x_0 - \x^*) \|_2 \\
              &\leq 2 \epsilon \s_{\mathrm{min}}(\A) \leq 4 C^{-1} \left( 1 - \sqrt{\frac{m-1}{n}} \right)^{-1}\e\leq 8C^{-1}\e.
  \end{align*}
  holds with the probability on the right-hand side of \eqref{stab-subg}. Using this in \eqref{eq:C6-temp} with $p=2$ yields \eqref{eq:stab-cor-l2}. To show the $p=1$ result, we first use the inequality $\|\w\|_1 \leq \sqrt{n} \|\w\|_2$ in \eqref{eq:C6-temp}, which yields \eqref{eq:stab-cor-l2} with $p=1$. Note that this $\sqrt{n}$ factor is sharp when $\A$ is a Gaussian random matrix, cf. Theorem \ref{T1}.
\end{proof}

\textcolor{black}{Note that both bounds \eqref{case-relative} and \eqref{eq:stab-cor-l2} are comparable if one can choose $\alpha \lesssim \epsilon^2$, but this unfortunately makes \eqref{eq:stab-sample} quite restrictive. 
Similar issue also rises in the analysis of the $\ell_1$ minimization in Theorem \ref{T:l1}. 
Note also that \eqref{eq:stab-cor-l2} is consistent with the result in Theorem \ref{zhang-stability}.}

Now we provide the proof of Theorem \ref{T6}:   

\begin{proof}[Proof of Theorem \ref{T6}]
If both conditions \eqref{eq:cond} hold, then 
\begin{align*}
\|\x^*-\x_0\|^2_2&=\|\x^*\|_2^2+\|\x_0\|_2^2-2\langle\x^*, \x_0\rangle\\
&\leq (1+(1+\alpha)^2)\|\x_0\|_2^2-2\left(1-\frac{\alpha^2}{2}\right)\|\x_0\|^2_2
\leq 4\alpha\|\x_0\|^2_2, 
\end{align*}
where the last inequality uses $\alpha^2<\alpha$ for $0<\alpha<1$. Thus, we now restrict our attention to when \eqref{eq:cond} do not hold simultaneously; the goal in this case is to compute upper and lower bounds for $\|\x^*-\x_0\|_1/\|\x^*-\x_0\|_2$, with the intuition for the upper bound being motivated by \eqref{key}. In the sequel, we denote the violation of \eqref{eq:cond-1} or \eqref{eq:cond-2} as $\eqref{eq:cond-1}^\complement$ and $\eqref{eq:cond-2}^\complement$, respectively. When \eqref{eq:cond} is violated, our desired upper bound will take the form
\begin{align}\label{eq:l12-upper}
  \frac{\|\x^*-\x_0\|_1}{\|\x^*-\x_0\|_2} \leq \frac{4 \sqrt{s}}{\alpha}.
\end{align}
To show this, suppose first that \eqref{eq:cond-1} is violated, then 
\begin{align*}
  \frac{\|\x^*-\x_0\|_1}{\|\x^*-\x_0\|_2} &\leq \frac{\|\x^*\|_1+\|\x_0\|_1}{\sqrt{\|\x^*\|_2^2+\|\x_0\|_2^2 - 2 \langle \x^*, \x_0 \rangle}}
  \stackrel{\eqref{eq:cond-1}^\complement} \leq\frac{\|\x^*\|_1+\|\x_0\|_1}{\sqrt{\frac{\alpha^2}{2}(\|\x^*\|_2^2+\|\x_0\|_2^2)}} \\
    &\leq \frac{2}{\alpha}\cdot\frac{\|\x^*\|_1+\|\x_0\|_1}{\|\x^*\|_2+\|\x_0\|_2}
    \leq\frac{2}{\alpha}\cdot\frac{\|\x_0\|_1}{\|\x_0\|_2}
    \stackrel{\eqref{key}}{\leq} \frac{2}{\alpha}\sqrt{s} \leq \frac{4 \sqrt{s}}{\alpha},
\end{align*}
which is the desired inequality \eqref{eq:l12-upper}. A violation of the lower condition in \eqref{eq:cond-2} in addition to \eqref{key} implies $\|\x^*\|_1\leq \|\x_0\|_1$. We therefore split the case when \eqref{eq:cond-2} is violated into two dichotomous sub-cases.
\begin{enumerate}
  \item $\eqref{eq:cond-2}^\complement$, case 1: $\|\x^*\|_1\leq \|\x_0\|_1.$
    Let $S$ be the support of $\x_0$. 
    The following inequality is true:
    \begin{align*}
      \|\x_0\|_1 \geq \| \x^*\|_1 = \|\x_0+(\x^*-\x_0)\|_1&=\|(\x_0+(\x^*-\x_0))_S\|_1+\|\x^*_{S^\complement}\|_1\\
&\geq \|\x_0\|_1-\|(\x^*-\x_0)_S\|_1+\|\x^*_{S^\complement}\|_1,
    \end{align*}
    so that we must have $\|\x^*_{S^\complement}\|_1 \leq \|(\x^*-\x_0)_S\|_1$.  Therefore, the Cauchy-Schwarz inequality implies
    \begin{align*}
      \frac{\|\x^*-\x_0\|_1}{\|\x^*-\x_0\|_2}&\leq\sqrt{2}\cdot\frac{\|(\x^*-\x_0)_S\|_1+\|\x^*_{S^\complement}\|_1}{\|(\x^*-\x_0)_S\|_2+\|\x^*_{S^\complement}\|_2}\\
      &\leq \sqrt{8}\cdot\frac{\|(\x^*-\x_0)_S\|_1}{\|(\x^*-\x_0)_S\|_2}\\
      &\leq \sqrt{8s} \leq \frac{4\sqrt{s}}{\alpha},
    \end{align*}
    which is the desired inequality \eqref{eq:l12-upper}.
  \item $\eqref{eq:cond-2}^\complement$, case 2: $\|\x^*\|_1 > \|\x_0\|_1$ and $\|\x^*\|_2> (1+\alpha)\|\x_0\|_2$.
    The condition $\|\x^*\|_1>\|\x_0\|_1$ and \eqref{key} together imply $\|\x^*\|_2>\|\x_0\|_2$. With this, we have
    \begin{align*}
    \frac{\|\x^*-\x_0\|_1}{\|\x^*-\x_0\|_2}&\leq\frac{\|\x_0\|_1+\|\x^*\|_1}{\|\x^*\|_2-\|\x_0\|_2}\\
    &\leq\frac{\alpha+1}{\alpha}\cdot\frac{\|\x_0\|_1+\|\x^*\|_1}{\|\x^*\|_2}\\
    &\leq\frac{\alpha+1}{\alpha}\left(\frac{\|\x_0\|_1}{\|\x_0\|_2}+\frac{\|\x^*\|_1}{\|\x^*\|_2}\right)\\
    &\leq\left(2+\frac{2}{\alpha}\right)\sqrt{s} \leq 4 \frac{\sqrt{s}}{\alpha}, 
    \end{align*}
    where the last inequality uses the fact that $\alpha < 1$.
\end{enumerate}
When \eqref{eq:cond} is violated, we have established \eqref{eq:l12-upper}. We now begin the main part of the proof: decompose $\x = \u + \w$ as in the assumption. If $\|\u\|_p \leq \|\w\|_p$ for either $p = 1, 2$, then 
\begin{align*}
  \| \x_0 - \x^* \|_p \leq \| \u\|_p + \|\w\|_p \leq 2 \|\w\|_p \leq \frac{2-\beta}{1-\beta} \|\w\|_p,
 \end{align*}
for any $\beta \in (0,1)$, proving \eqref{stab-2-est}. Therefore, we can assume $\|\u\|_p > \|\w\|_p$ for both $p = 1, 2$. In this case, 
\begin{align*}
\frac{\|\x^*-\x_0\|_1}{\|\x^*-\x_0\|_2}\geq\frac{\|\u\|_1-\|\w\|_1}{\sqrt{\|\u\|_2^2+\|\w\|_2^2}}\geq\frac{1-\frac{\|\w\|_1}{\|\u\|_1}}{\sqrt{1+\frac{\|\w\|_2^2}{\|\u\|_2^2}}}\cdot\frac{\|\u\|_1}{\|\u\|_2}\geq h(v)\,\frac{\|\u\|_1}{\|\u\|_2}, 
\end{align*} 
where 
\begin{align}\label{eq:v-def}
  h(v) &\coloneqq \frac{1-v}{\sqrt{1 + v^2}}, & v&\coloneqq\max\left\{\frac{\|\w\|_1}{\|\u\|_1}, \frac{\|\w\|_2}{\|\u\|_2}\right\}.
\end{align}
If $h(v)\, \frac{\|\u\|_1}{\|\u\|_2}>\frac{4\sqrt{s}}{\alpha}$, then this contradicts \eqref{eq:l12-upper}, so that \eqref{eq:cond} must hold, showing \eqref{case-relative}. Therefore it only remains to consider when $h(v)\,\frac{\|\u\|_1}{\|\u\|_2}\leq\frac{4\sqrt{s}}{\alpha}$. Since $h(v) \geq \frac{1}{\sqrt{2}} (1-v)$ for $v > 0$, then 
\begin{align*}
  \frac{\beta}{\alpha \sqrt{2}} = \frac{4 \sqrt{s}}{\alpha} \frac{\|\u\|_2}{\|\u\|_1} \geq h(v) \geq \frac{1}{\sqrt{2}} (1-v),
\end{align*}
showing that $v \geq 1 - \frac{\beta}{\alpha}$.
Thus, if $p = 1$ or $2$ corresponds to whichever norm maximizes \eqref{eq:v-def}, then
\begin{align*}
  \|\u\|_p = v^{-1}\|\w\|_p \leq \left(1 - \beta/\alpha\right)^{-1} \|\w\|_p 
\end{align*}
If we add $\|\w\|_p$ to both sides and apply the triangle inequality to the left-hand side, we obtain the desired result \eqref{stab-2-est}.
\end{proof}

\begin{Rem}
The discussion above splits into two cases based on the conditions $\langle \x_0, \x^*\rangle \geq (1-\alpha^2/2)\|\x_0\|_2\|\x^*\|_2$ and $\|\x_0\|_2\leq\|\x^*\|_2\leq (1+\alpha)\|\x_0\|_2$. This would be unnecessary if one can show that $\|\x^*-\x_0\|_1/\|\x^*-\x_0\|_2$ is bounded by some universal constant (independent of $n$ and $\x^*$) times $\sqrt{s}$. Even though there is strong intuition that this is correct, a proof is currently elusive. 
\end{Rem}

\section{Numerical experiments}\label{numerical experiments}

In this section, we present several numerical simulations to complement our previous theoretical investigation. All the results in this section are repeatable on a standard laptop installed with $\texttt{R}$ \cite{RRR}. For simplicity, we will restrict to the noiseless case. The noisy case can be carried out similarly by tuning the parameter of the penalty term arising from the constraint. Since $\ell_1/\ell_2$ minimization problems are non-convex, our algorithms solve the problem approximately. \textcolor{black}{In particular, we utilize the algorithms from \cite{Wang_2020}, which is essentially the Inverse Power Method \cite{hein2010inverse} with an extra augmented quadratic term in $\x$-update.} For completeness, we briefly explain how the algorithm works: It was observed in \cite{Wang_2020} that subject to $\A\x=\b$, minimizing $\|\x\|_1/\|\x\|_2$ is equivalent to minimizing $\|\x\|_1-\alpha \|\x\|_2$ for some $\alpha\in [1,\sqrt{n}]$, where $\alpha$ is some case-dependent parameter. Since the true value of $\alpha$ is unknown, one can start with an initial guess and update it using a bisection search. At iteration $k$, a full bisection search requires solving a minimization problem of the form $\min_{\A\x=\b}\|\x\|_1-\alpha^{(k)}\|\x\|_2$ and the minimizer will be used to update $\alpha^{(k)}$. To accelerate, an adaptive algorithm based on the difference of convex functions algorithm (see \cite{Tao_1998}) was proposed in \cite{Wang_2020} by replacing $\alpha^{(k)}\|\x\|_2$ by its linearization at the previous iterate $\x^{(k-1)}$ with an additional regularization term scaled by a tunable parameter $\beta$. Given an initialization $\x^{(0)}$ and $\alpha^{(0)}$, the alternating algorithm can be summarized as follows:
\begin{align*}
\begin{cases}
\x^{(k+1)}&=\arg\min_{\A\x=\b}\left\{\|\x\|_1-\frac{\alpha^{(k)}}{\|\x^{(k)}\|_2}\langle \x, \x^{(k)}\rangle+\frac{\beta}{2}\|\x-\x^{(k)}\|_2^2\right\}\\
\alpha^{(k+1)}&=\frac{\|\x^{(k+1)}\|_1}{\|\x^{(k+1)}\|_2}.
\end{cases}
\end{align*}
The $\x$-subproblem can be efficiently solved using the Alternating Direction Method of Multipliers (ADMM), see \cite{Boyd_2010}. It was shown in \cite{Wang_2020} that small regularization parameters $\beta$ tend to yield better local decay rate. In our simulations we use $\beta=0.5$. A choice for the penalty parameter $\rho$ in the ADMM algorithm (not shown above) is slightly more tricky. In our experiment we choose $\rho=20$, but other kinds of simulations may require tuning for a different value of $\rho$. We emphasize that the lack of certainty about the choice of these parameters is a drawback of the algorithms in general, and is not introduced by our implementation or choice of application. Since ADMM is a relaxation scheme, it can only approximately solve the original problem, resulting in a solution vector upon termination many of whose components have small magnitude. To increase the stability of the algorithm, a box constraint based on prior information will be incorporated, and the details will be specified later.   

\subsection{Initialization and support selection}
A common issue in solving non-convex optimization problems is that algorithms may become trapped at local minimizers. This phenomenon is particularly worrying in our case. Indeed, due to the scale-invariant structure, the objective function $\ell_1/\ell_2$ may have infinitely many local minimizers in the feasible set. As a result, global convergence of the above algorithm depends on a good initialization. A natural choice would be the $\ell_1$ minimizer or the first a few steps of the iterative reweighted least squares (IRLS) solving $\ell_0$, see \cite{foucart2009sparsest}, \cite{Daubechies_2008} and \cite{Lai_2013}. The intuition of these choices is that the $\ell_1$ (or $\ell_q$ if IRLS is used) minimizer is not too far from $\x_0$, and $\x_0$ is one of the minimizers of $\ell_1/\ell_2$. Therefore, success of the above algorithm under such initialization heavily relies on the `approximate' success of the $\ell_1$ minimization. This observation will be numerically verified later. To overcome the strong dependence on the $\ell_1$ ($\ell_q$) minimization, we will propose a novel initialization approach based on a support selection process to make the algorithm less reliant on the $\ell_1$ minimizer and leads to improved results.

We propose to initialize $\x^{(0)}$ in a way that utilizes the information of the support of $\x_0$. Unfortunately, the support of $\x_0$ is generally unknown. As a substitute, one can use the support of the recovered solution from other algorithms. Here we will interpret the support of the $\ell_1$ minimizer as near-oracle identification of the support of $\x_0$. Indeed the theoretical uniform recoverability of $\ell_1$ minimization makes it superior to most greedy algorithms, and algorithmic implementations of $\ell_1$ minimization have better convergence guarantees than many other non-convex algorithms. For a fixed sparsity $s$, we first compute the best $s$-term approximation of the $\ell_1$ minimizer, which we denote by $\x_{s}$. Instead of using $\x_{s}$ directly for initialization, we will consider each element of the support separately. For every $i\in\text{supp}(\x_{s})$, we consider the initialization $\x^{(0)}$ as defined by a vector whose $i$-th component is $\langle \b,\mathbf{a}_i\rangle/\|\mathbf{a}_i\|_2^2$ and the other components are $0$, where $\mathbf{a}_i$ is the $i$-th column vector of $\A$. The idea behind this is to counteract the influence of incorrectly detected components in the support on the correctly detected support. 
One may also view this as a way to mitigate algorithm failure (see \cite{Rahimi_2019}) via multi-initialization. In total, one needs to solve $s$ subproblems of $\ell_1/\ell_2$ minimization and choose the solution that gives the smallest $\ell_1/\ell_2$ value. Thus, the computational complexity will be $s$ times more than that of solving a single $\ell_1/\ell_2$ minimization problem. This increased cost can be mitigated via parallel computing since the subproblems are embarrassingly parallel. We will call the $\ell_1/\ell_2$ algorithm with this proposed initialization ``$\ell_1/\ell_2$+SS", where SS stands for \emph{support selection}. In fact, this multi-initialization approach is also applicable to other iterative algorithms. 

\subsection{$\ell_1 / \ell_2$ simulation particulars}
In the simulation below, we choose the measurement matrix $\A$ to be a $50\times 250$ Gaussian random matrix with iid standard normal entries. $s$ is the sparsity level of generated vectors $\x_0$. For each fixed value of $s$, we generate $\x_0$ by randomly choosing $s$ of its components to be nonzero. The nonzero components are independently drawn from distributions with different dynamic range: $\text{Uniform}([-10, 10])$ and $\text{Uniform}([-10, 5]\cup [5, 10])$.

An additional box constraint $\|\x\|_\infty\leq 10$ based on this prior information on magnitude of entries is computationally imposed during iterations to solve the $\ell_1/\ell_2$ problem in both cases. Note that this extra constraint does not change the problem if $\x_0$ is the global minimizer. If $\x_0$ is not the global minimizer, the box constraint will disallow solution vectors with erroneous large magnitude, making the algorithm more stable in practice.

To detect exact recovery, we will use a slightly different criteria than the commonly used relative error threshold in other literature. We say that a computed solution $\x$ recovers $\x_0$ if the support of the best $50$-term approximation of $\x$ under the $\ell_1$ norm contains the support of $\x_0$. Indeed, in this case $\x_0$ can be easily reconstructed by solving $\A\x=\b$ with $\A$'s columns restricted to the support of the $50$ largest components of $\x$. The reason for this criteria is due to computational reasons. Based on our choice of regularization and relaxation parameters, the computational error of $\ell_1/\ell_2$ cannot be made as small as many other algorithms with known convergence guarantees.

\subsection{List of algorithms}
Now we compare the $\ell_1/\ell_2$+SS algorithm ({l1/l2+SS}) described above with the box constraint against the following popular non-convex (and $\ell_1$) methods in sparse recovery: 
\begin{itemize}
  \item $\ell_1$ minimization ({l1}): The box constraint is included for consistency in comparison. We will use the linear programming package \texttt{lpSolve} (\cite{LP}) in \texttt{R} to solve it.

\medskip

\item Reweighted $\ell_1$ minimization ({RWl1+l1}): The box constraint is included for consistency in comparison. We will use the algorithm in Section 2.2 of \cite{Candes_2008}  to solve it, with the regularization parameter $\e=0.1$. The initialization is set as the $\ell_1$ minimizer.

\medskip 

\item $\ell_{1/2}+\ell_1$ minimization ({l1/2+l1}): We will use the IRLS algorithm in \cite{foucart2009sparsest} to solve it. (We do not use the improved versions in \cite{Daubechies_2008} or \cite{Lai_2013} as they require knowledge on the NSP/RIP of $\A$, which is hard to compute in practice. For technical reasons, we are not able to incorporate the box constraint in this case.) The initialization is set as the $\ell_1$ minimizer.

\medskip

\item $\ell_{1/2}$+SS minimization ({l1/2+SS}): This is the same as the above $\ell_{1/2}$ algorithm but initialized with the additional support selection process introduced above.

\medskip

\item $\ell_1-\ell_2$ minimization ({l1-l2+l1}): The box constraint is included for consistency in comparison. We will use the algorithm in \cite{Yin_2015} to solve it. The Lasso penalty parameter and the ADMM penalty parameter are chosen to be $0.01$ and $100$, respectively. The stopping rules are the same as the one proposed in \cite{Yin_2015}. The initialization is set as the $\ell_1$ minimizer. 

\medskip

\item $\ell_1/\ell_2+\ell_1$ minimization ({l1/l2+l1}): The box constraint is included for consistency in comparison. We use the adaptive algorithm in \cite{Wang_2020} to solve it with the $\ell_1$ initialization. 

\medskip

\item \textcolor{black}{Orthogonal Marching Pursuit ({OMP}): We use the OMP algorithm in \cite{Tropp_2007}. The stopping criterion is either the length of the residual falls below $10^{-8}$ or the size of the detected support exceeds the total number of measurements, which in our case equals $50$. }

\medskip

\item \textcolor{black}{Compressive Sampling Marching Pursuit ({CoSaMP}): We use the CoSaMP algorithm in \cite{Needell_2010}. The stopping criterion is either the length of the residual falls below $10^{-8}$ or the total iteration step exceeds $100$. The use of the maximal iteration step in the halting rule is due to that convergence of the CoSaMP is guaranteed if the measurement matrix satisfies certain RIP conditions \cite{Satpathi_2017}. When sparsity gets larger, the required RIP condition is no longer valid thus the algorithm may not converge.}

\end{itemize}

The sparsity $s$ ranges from $6$ to $24$ by increments of 2, and for each $s$ we perform $100$ independent experiments with the average recovery rates and relative error recorded. The complete average comparison simulation is run $100$ times with the quantiles plotted to demonstrate the uncertainty in the simulation. The quantile levels are chosen as $0.2$-$0.5$-$0.8$ for each method. The results are given below.
  
\begin{figure}[htbp]
\centering
  \includegraphics[width=0.45\textwidth]{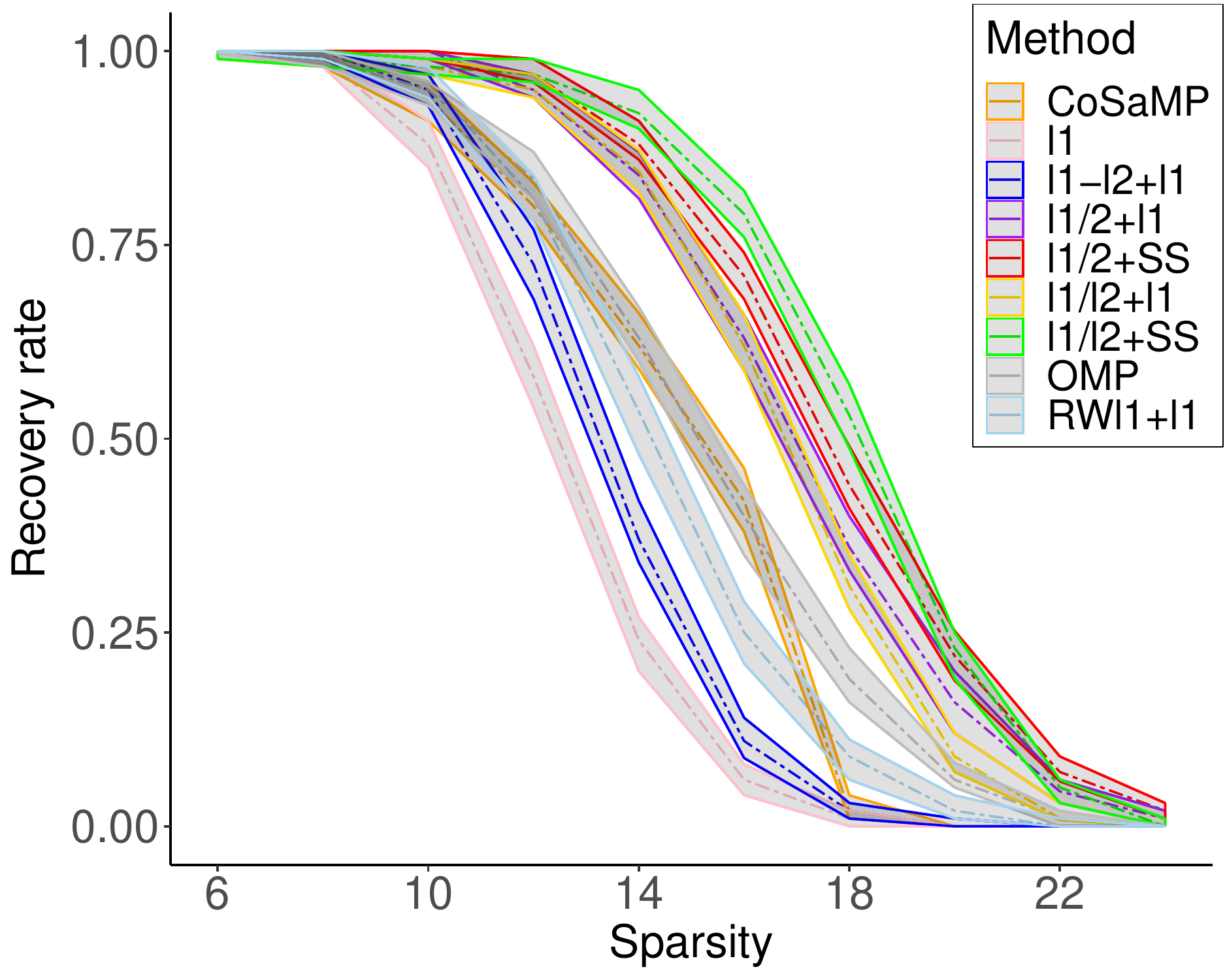}\ \ \ \ 
  \includegraphics[width=0.45\textwidth]{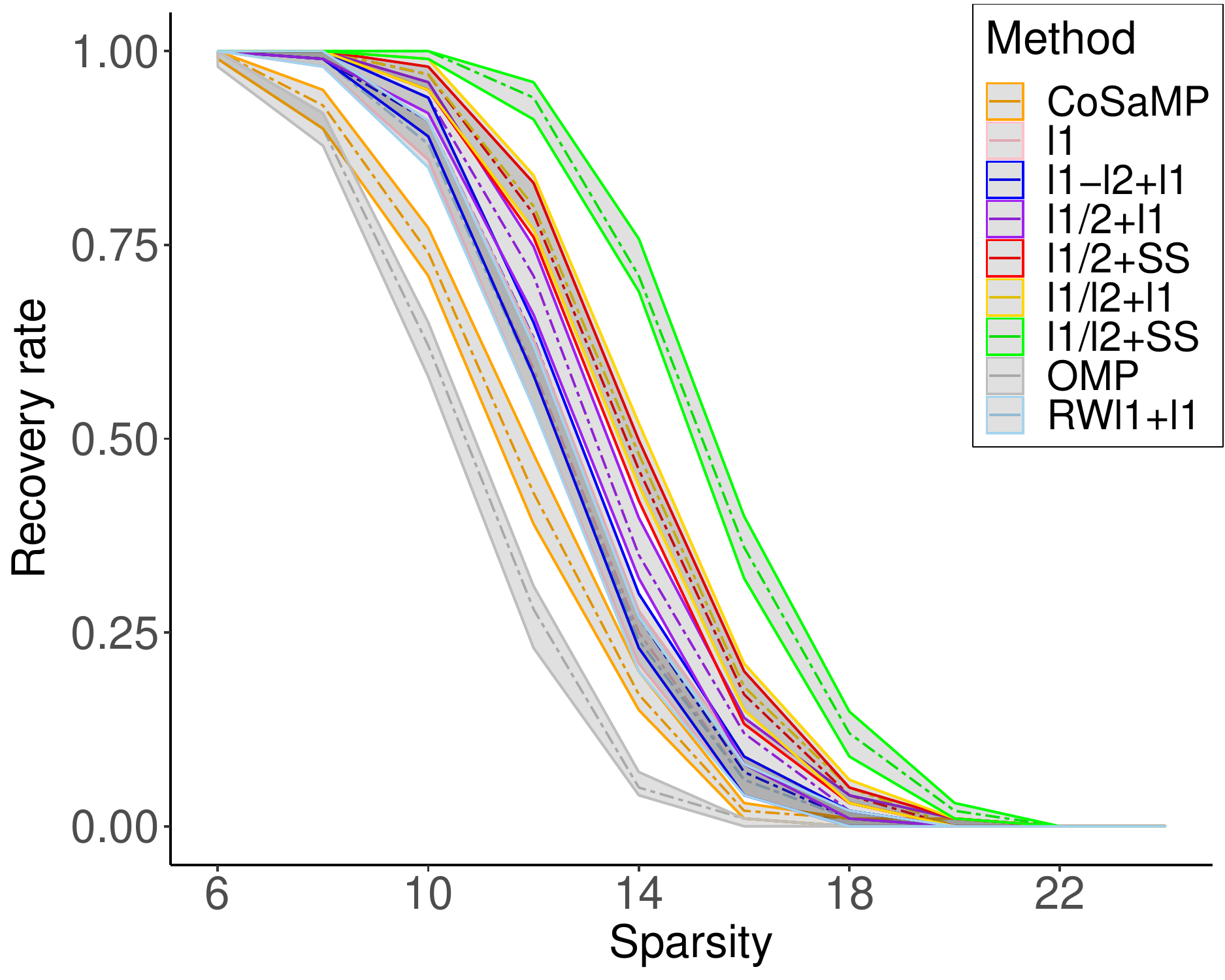}
  \caption{$0.2$-$0.5$-$0.8$ quantile band for the average recovery rate: Left: Uniform$([-10, 10])$ coefficients. Right: Uniform$([-10, -5]\cup [5, 10])$ coefficients.}\label{avg-rate}
\end{figure}

\textcolor{black}{To compare the scalability of the algorithms, we compute the average processing time of each algorithm applied to signals of varying length. The range of length of the signal $n$ is chosen between $2^6$ and $2^{12}$, increasing by a multiple constant $2$ at a time. 
The number of measurements $m$ is set as $m=n/4$, and the sparsity level $s$ is set as $s=m/4$. The components in the support of the signal are uniformly generated from $[-10,10]$. 
For each algorithm, its average processing time is computed as the average time of running $10$ independent samples. Since the support selection procedure is algorithmically equivalent to applying a single-initialization algorithm $s$ times, its average processing time is taken as $s$ multiplied by the time for the same algorithm without support selection. The results are given in Figure \ref{time}. Many of the algorithms exhibit similar asymptotic computational complexity, although CoSaMP, {l1-l2+l1}, and {l1/l2+l1} have slightly better complexity. The computational cost for the support selection procedures is higher than most of the rest, but the asymptotic complexity is similar.} 

\begin{figure}[htbp]
\centering
  \includegraphics[width=0.45\textwidth]{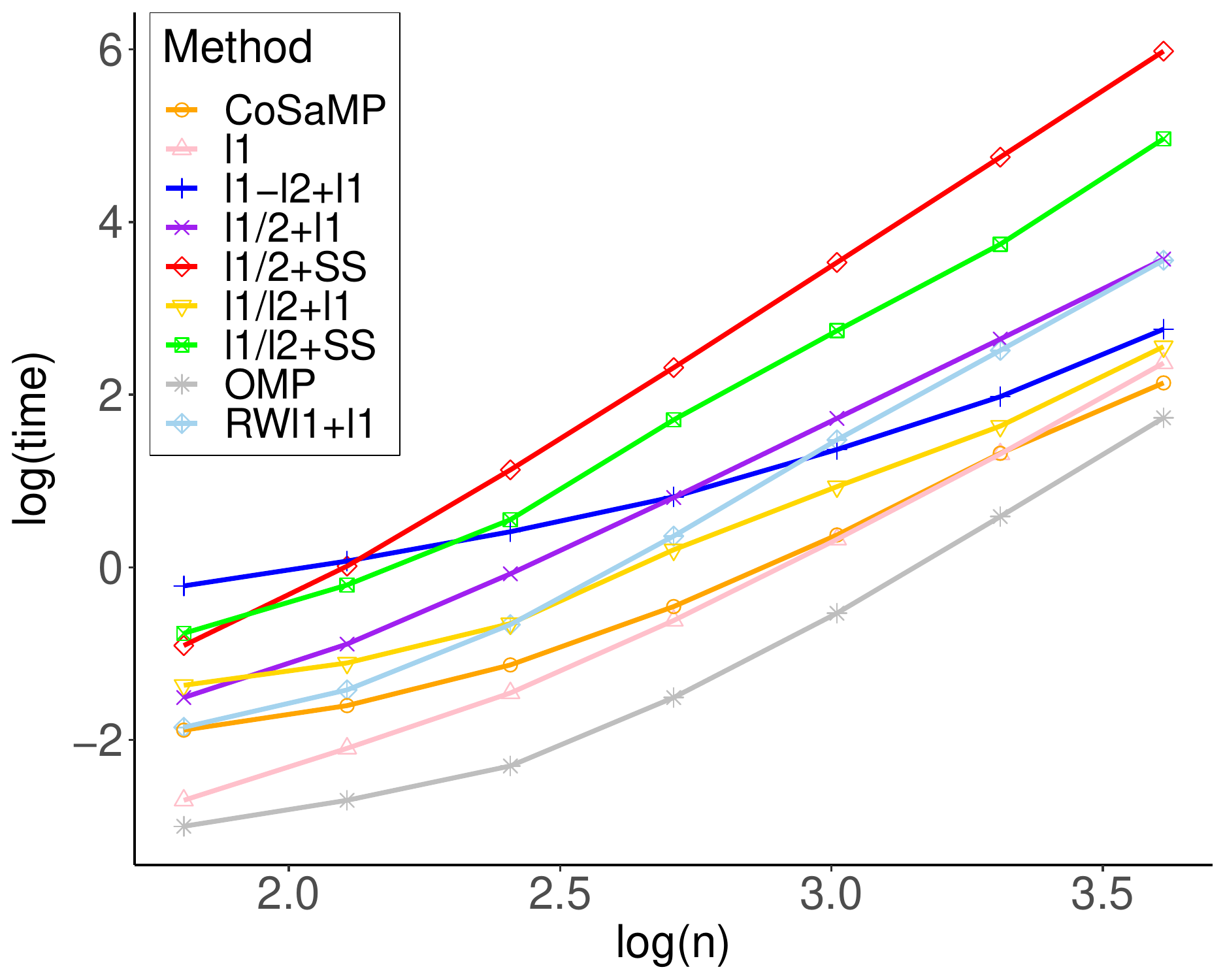}
  \caption{\textcolor{black}{Average processing time of the algorithms applied to signals of different length. Both the $x$ and $y$ axis are plotted using the logarithm with based $10$. }}\label{time}
\end{figure}

\subsection{Simulation results}

It can be seen from Figure \ref{avg-rate} that for both types of coefficients, $\ell_1/\ell_2$ with the box constraint and SS-initialization has the best performance among all non-convex optimization methods under comparison. $\ell_{1/2}$ also performs fairly well but is slightly inferior to $\ell_1/\ell_2$. This is no surprise since $\ell_1/\ell_2$ utilizes the box constraint which is absent in the $\ell_{1/2}$ algorithm. On the other hand, by taking the SS-initialization, the recovery rate of $\ell_1/\ell_2$ has significantly improved compared to a similar step taken for $\ell_q$. This implies that $\ell_1/\ell_2$ is more sensitive to the initial value and the multi-initialization step enhances the success rate of the algorithm. By comparing the left- and right-hand panels in Figure \ref{avg-rate}, it is easy to observe that all the methods under comparison perform better when the dynamic range of the coefficients is large. This phenomenon can be well explained for reweighted $\ell_1$ and $\ell_q$, in which a reweighting step is used to reduce the bias between $\ell_q$ ($0<q\leq 1$) and $\ell_0$. However, for $\ell_1/\ell_2$, this is not well understood. We provide some theoretical evidence in Theorem \ref{local optimality} for this behavior in terms of the local optimality condition; nevertheless, a complete understanding of this is still absent.

\textcolor{black}{It can be seen from Figure \ref{time} that based on our choice of algorithms, $\ell_1/\ell_2$ with single initialization demonstrates a reasonable computational time asymptotically. It is more expensive than the greedy algorithms and $\ell_1$, of which the solution is used to give a good initialization for $\ell_1/\ell_2$. It is almost at the same level as the $\ell_1-\ell_2$ since both used the ADMM relaxation in the computation. 
Meanwhile, it is cheaper than the other non-convex algorithms such as $\ell_q$ and reweighted $\ell_1$ which either require matrix inversion or solving a linear programming problem in each iteration (more advanced numerical methods can help accelerate computation in practice, but we do not investigate it here). The support selection procedure increases the processing time of the algorithms by a multiple factor of the sparsity level. When sparsity is large, this effect is not negligible but can be mitigated via parallel computing.}

It is worth pointing out that although $\ell_1/\ell_2$ algorithms yield better recovery results when the magnitude of the entries of $\x_0$ are known \emph{a priori} to be bounded from above, their recovery rate is closely related to the accuracy of the $\ell_{1}$ ($\ell_q$) minimizer. If the solution $\x$ obtained from minimizing $\ell_{1}$ ($\ell_q$) is incoherent with $\x_0$, then it is unlikely that $\ell_1/\ell_2$ will give substantially better result. Our initialization approach proposed earlier is not able to completely remove such a dependence, and only mitigates the impact. However, it is likely that the support of the $\ell_1$ minimizer contains at least one component that lies in the true support of $\x_0$. If one of these components happens to be close to the $\ell_1/\ell_2$ convergence regime of $\x_0$, then the support selection process will promote convergence to $\x_0$ by removing the influence of other elements in the detected support. Figure \ref{cor} below verifies this point.

\begin{figure}[htbp]
\centering
  \includegraphics[width=0.45\textwidth]{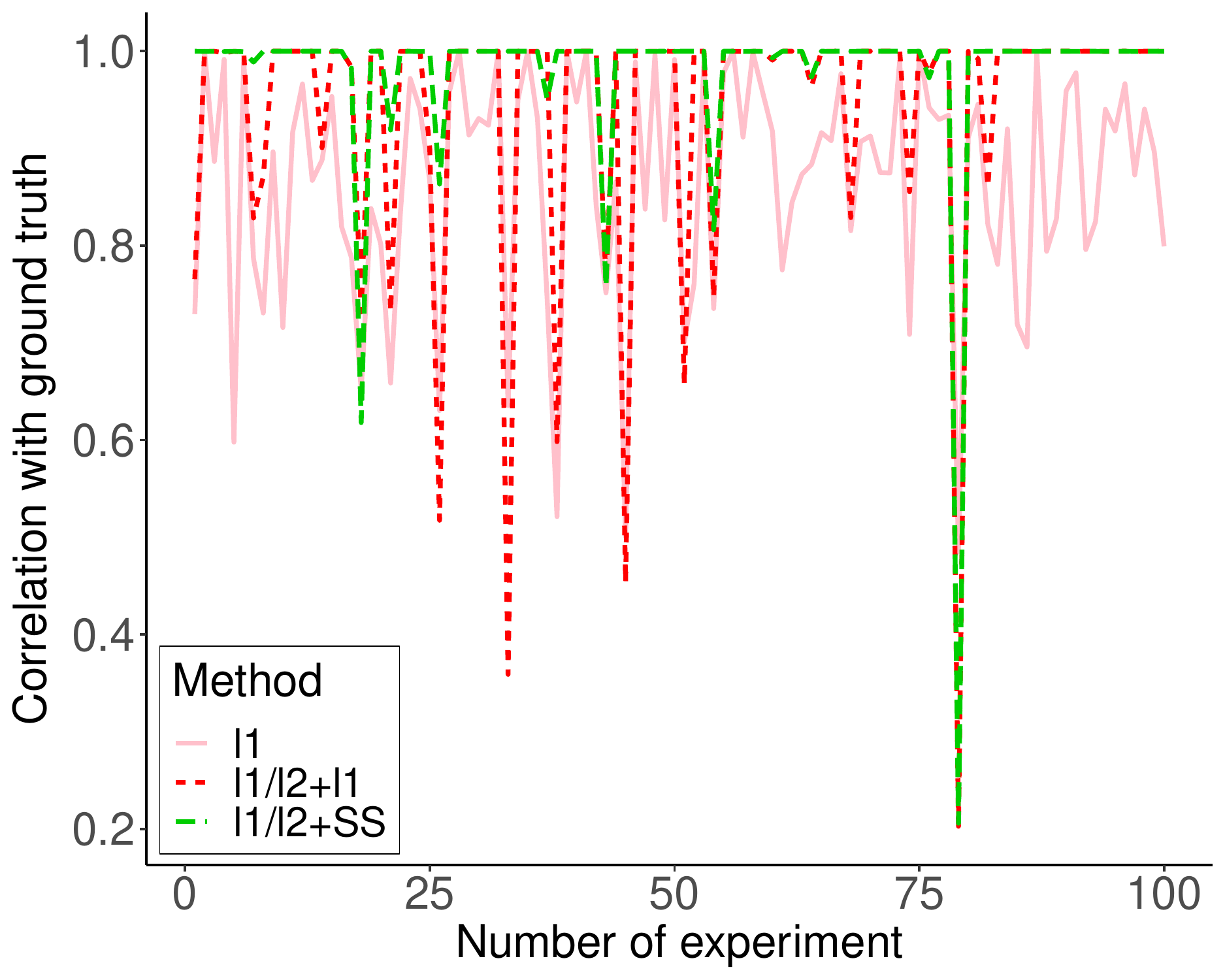}\ \ \ \ 
  \includegraphics[width=0.45\textwidth]{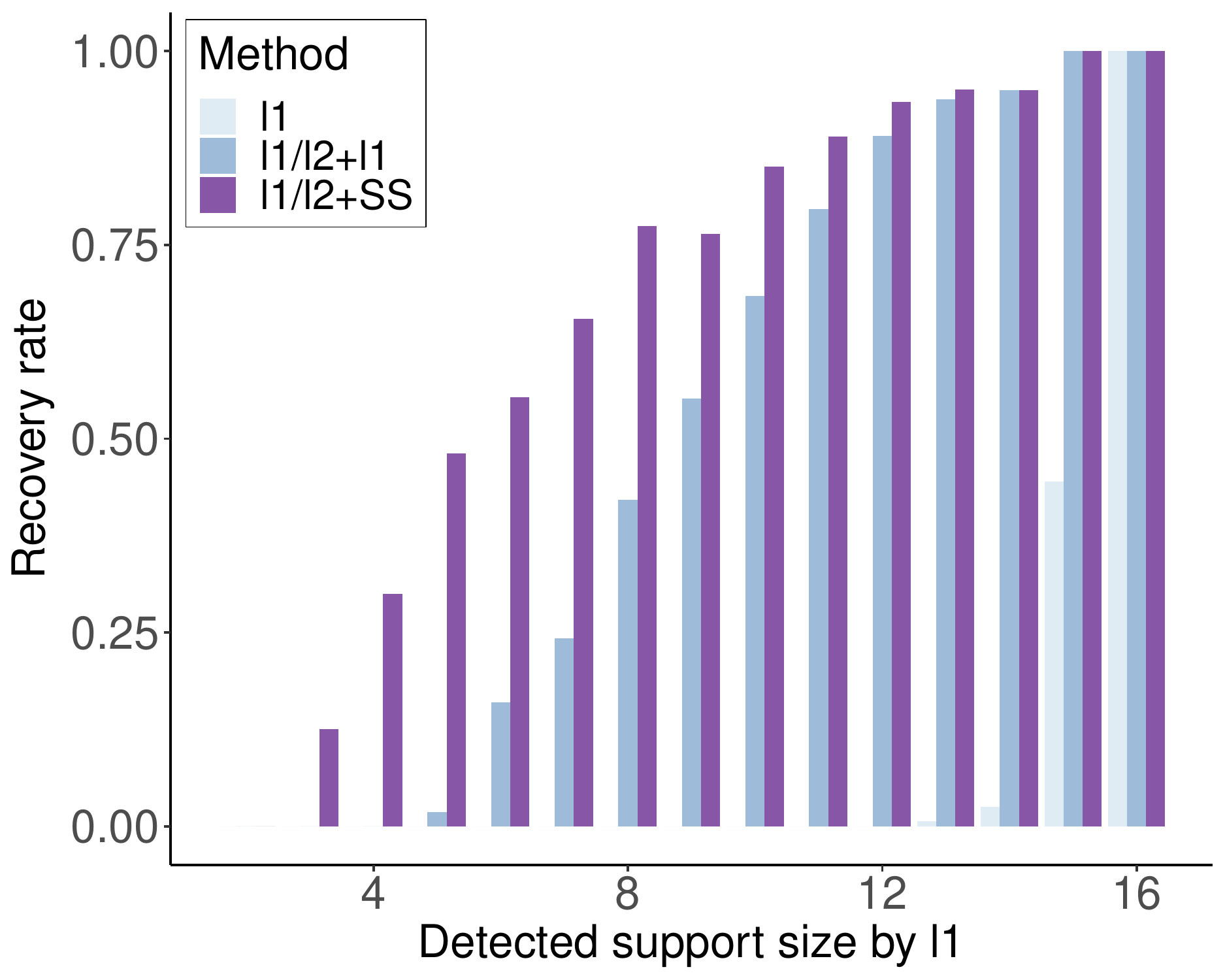}
  \caption{\textcolor{black}{Numerical experiments on the support selection initialization. 
  Left: Correlation between the minimizers of $\ell_{1}$, $\ell_1/\ell_2+\ell_1$, $\ell_1/\ell_2$+SS and the ground truth $\x_0$ in $100$ experiments in the case of coefficients drawn from the distribution Uniform$([-10, 10])$ with sparsity level $s=16$. Right: Recovery rate of $\ell_{1}$, $\ell_1/\ell_2+\ell_1$, and $\ell_1/\ell_2$+SS as a function of $\ell_1$-detected support size over $2000$ experiments (total), with Uniform$([-10, 10])$ coefficients and sparsity level $s=16$. }}
  \label{cor}
\end{figure}

In Figure \ref{cor}, the left panel illustrates the correlation between the minimizers of $\ell_{1}$, $\ell_1/\ell_2+\ell_1$, $\ell_1/\ell_2$+SS and the true signal $\x_0$ in $100$ experiments when $s=16$ and the coefficients are chosen from Uniform$[-10,-10]$. It can be seen that the general trend of the three curves is similar. When the correlation between the $\ell_1$ minimizer and $\x_0$ is low, say below $0.5$, it is also low for both the $\ell_1/\ell_2$ minimizers. This implies that success of the $\ell_1/\ell_2$ algorithms heavily relies on the $\ell_1$ minimizer being reasonably close to $\x_0$. 
\textcolor{black}{On the other hand, the right panel compares the recovery rate of the three methods for different sizes of the $\ell_1$-detected support over $2000$ experiments. 
The $\ell_1$-detected support refers to the number of indices in the support of $\x_0$ that are correctly detected by the best $s$-sparse truncation of the $\ell_1$ minimizer. 
When the detected support is close to the true support, there is a large chance that both $\ell_1/\ell_2$ algorithms can successfully push it towards $\x_0$.  
As the detected support size diminishes, $\ell_1/\ell_2$ with the support-selection based initialization appears to do a better job. 
This provides some numerical evidence that utilizing support information of the $\ell_1$ minimizer through support selection helps reduce algorithm failure. 
}


\textcolor{black}{In Figure \ref{case study}, we give a concrete example in which both $\ell_1$ and $\ell_1/\ell_2$ initialized with $\ell_1$ failed to recover $\x_0$ but with the additional support selection process, it succeeded. 
The left panel visualizes the structures of both the true and recovered signals. 
The support of the true signal is
$$\text{supp}(\x_0) = \{212, 194, 66, 73, 132, 248, 234, 70, 12, 249, 226, 102, 69, 85, 201, 106\},$$ 
arranged according to decreasing magnitude of the entries.
In particular, $\x_0[212] = -9.832$ and $\x_0[106]=0.158$. 
The $\ell_1$ minimization completely fails to recover the solution in the sense that $\text{supp}(\x_0)\not\subseteq\text{supp}(\x_{\ell_1})$, where $\x_{\ell_1}$ is the $\ell_1$ minimizer. 
In this case, the $\ell_1/\ell_2$ with $\ell_1$ initialization moves to a local minimizer near $\x_{\ell_1}$ which is different from $\x_0$, whereas the same algorithm with support selection initialization successfully detects $\x_0$. 
The right panel gives a more careful comparison between the true components and the recovered components using $\ell_1/\ell_2$+SS on the support.}

\textcolor{black}{To better understand the success of the support selection procedure, we compare the $\ell_1/\ell_2$ objective value of the $s$ minimizers obtained from initializing on each component in the support of the best $s$-approximation of $\x_{\ell_1}$. This is given in Table \ref{sstudy}. 
In our example, the support of the best $s$-approximation of $\x_{\ell_1}$ is $$\text{supp}(\x_{\ell_1}|_s) = \{234, 137, 212, 66, 145, 85, 87, 102, 194, 132, 99, 110, 205, 40, 246, 128\}.$$
The detected support by the $\ell_1$ is  
\begin{align*}
\text{supp}(\x_0)\cap\text{supp}(\x_{\ell_1}|_s) = \{212, 194, 66, 132, 234, 102, 85\}.
\end{align*}
The $\ell_1/\ell_2$ objective values of $\x_0$ and the solutions found by the $\ell_1$ and $\ell_1/\ell_2+\ell_1$ are $3.456, 5.173$ and $4.510$, respectively.  It is clear from Table \ref{sstudy} that support selection procedure initialized at indices $212, 194, 66$ and $132$ succeeded in recovering $\x_0$ (up to some computational error). 
These indices are in the support of $\x_0$. 
Note that initialization at other indices which are also in $\text{supp}(\x_0)$ such as $234, 102$ and $85$ results algorithm failure. A possible explanation for this is that the magnitude of $\x_0$ on these indices is relatively small compared to that on the indices leading to success. }

\begin{figure}[htbp]
\centering
  \includegraphics[width=0.45\textwidth]{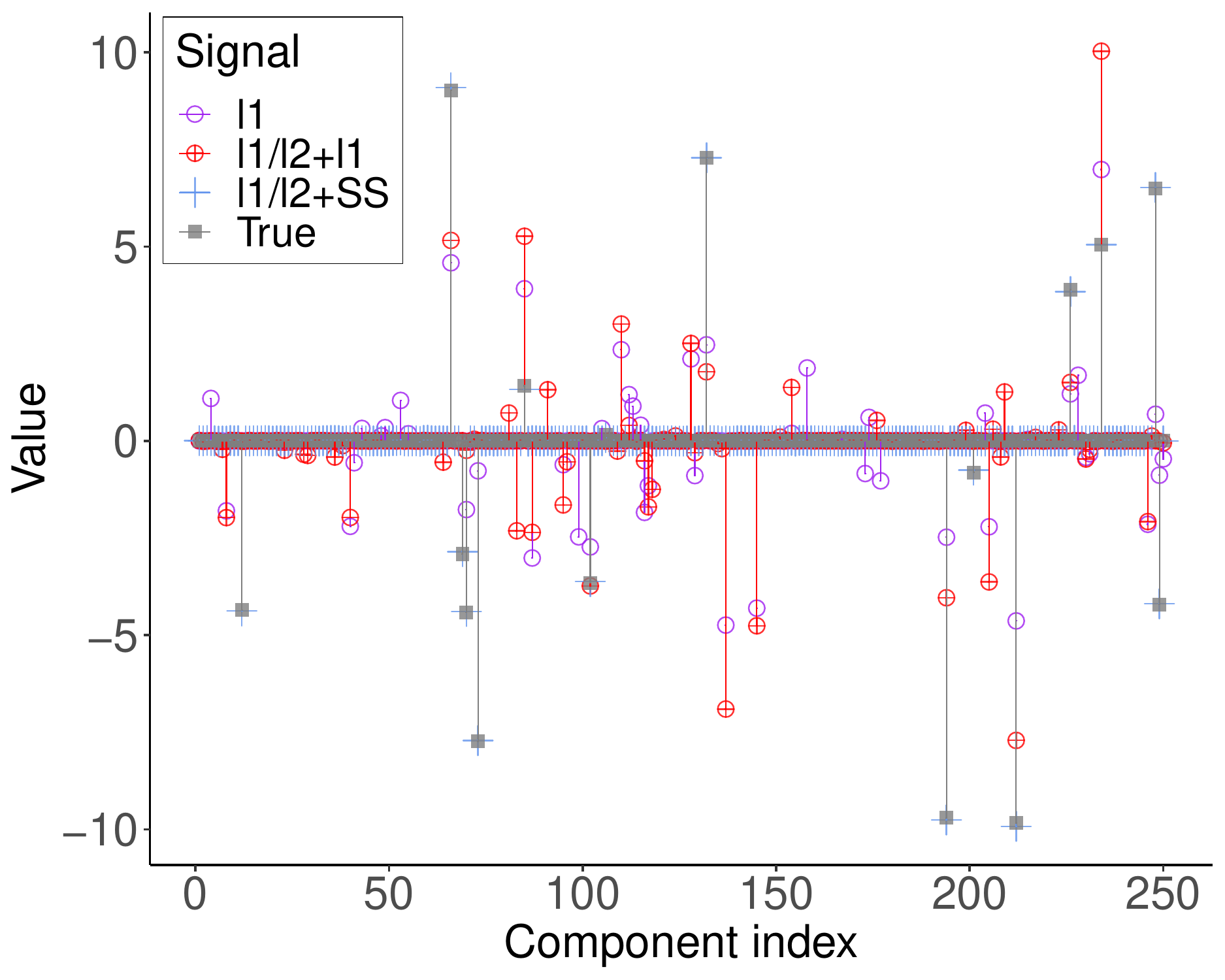}\ \ \ \ 
  \includegraphics[width=0.45\textwidth]{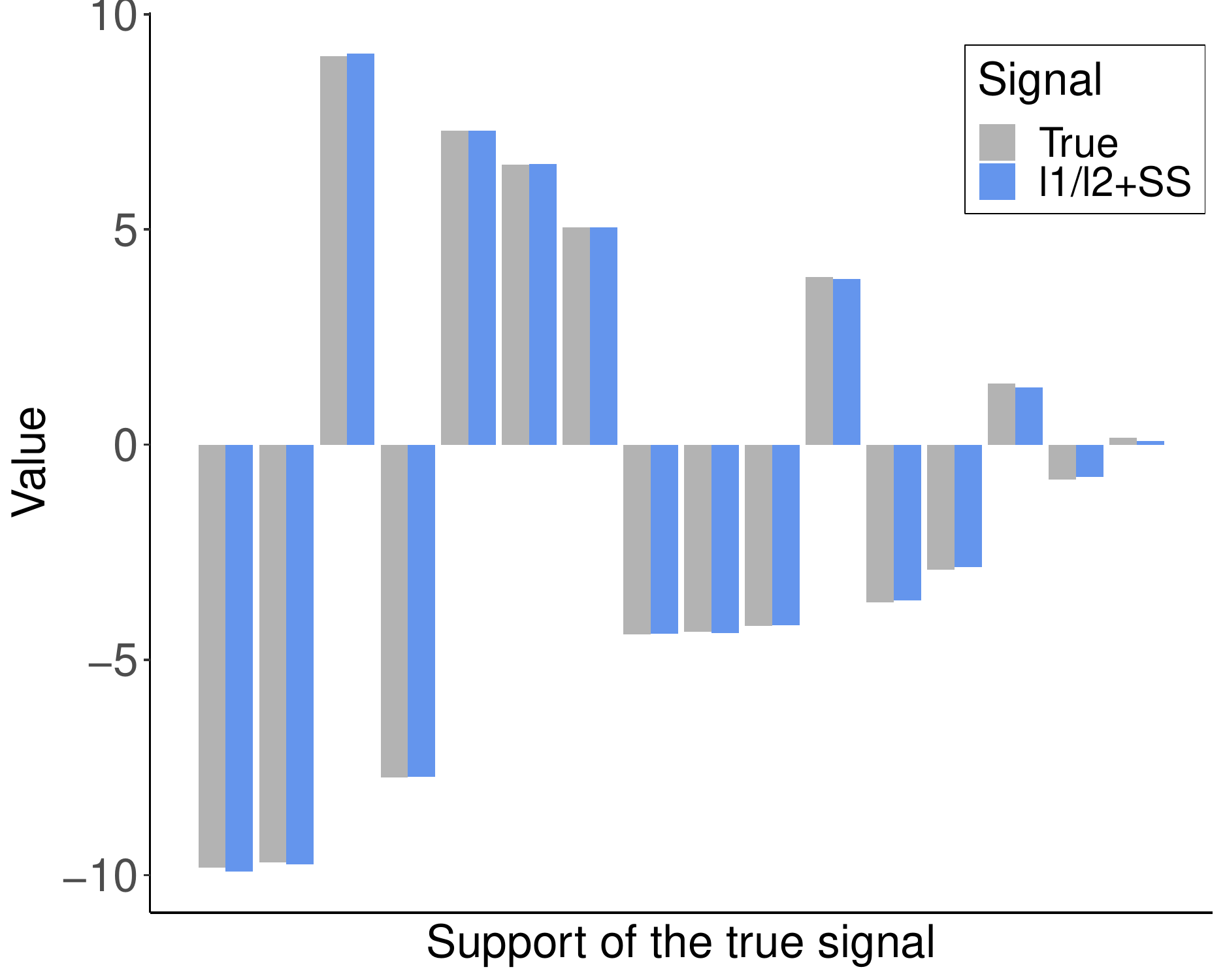}
  \caption{\textcolor{black}{A case where both $\ell_1$ and $\ell_1/\ell_2+\ell_1$ failed to recover $\x_0$ but $\ell_1/\ell_2$+SS succeeded in the case of coefficients Uniform$([-10, 10])$ and sparsity level $s=16$: Left: General distribution of magnitude of the true components and recovered components using $\ell_1$, $\ell_1/\ell_2+\ell_1$ and $\ell_1/\ell_2$+SS. Right: Careful comparison between the true components and the recovered components using $\ell_1/\ell_2$+SS on the support. 
  }}\label{case study}
\end{figure}

\begin{table}
\small
\begin{center}
\begin{tabular}{l*{6}{c}r}
Initialized index in $\text{supp}(\x_{\ell_1}|_s)$       &  $\ell_1/\ell_2$  \\
\hline
234 & 4.235  \\
137     & 4.450 \\
 \textbf{212}   & \textbf{\textcolor{red}{3.483}} \\
\textbf{66}    & \textbf{\textcolor{red}{3.483}} \\
145     & 4.401 \\
 85    & 4.277 \\
87     & 4.282 \\
102     & 4.248 \\
\textbf{194}     &\textbf{\textcolor{red}{3.483}} \\
\textbf{132}     & \textbf{\textcolor{red}{3.483}} \\
99   & 4.520 \\
110     & 4.361 \\
205     & 4.493 \\
40     & 4.505 \\
246     & 4.520 \\
128    & 4.480 \\
\end{tabular}
\end{center}
 \caption{\textcolor{black}{Comparison of the $\ell_1/\ell_2$ objective value from the $16$ single-support initializations in the Figure \ref{case study}. Initializations started from support indices $212, 66, 194$ and $132$ lead to solutions with the best $\ell_1/\ell_2$ value $3.483$, which approximately matches the $\ell_1/\ell_2$ value ($3.456$) of $\x_0$. }}\label{sstudy}
\end{table}

\section{Conclusion and future work}\label{sec:con}
We have theoretically and numerically investigated the $\ell_1/\ell_2$ minimization problem in the context of recovery of sparse signals from a small number of measurements. We have provided a novel local optimality criterion in Theorem \ref{T:local}, which gives some theoretical justification to the empirical observation that $\ell_1/\ell_2$ performs better when the nonzero entries of the sparse solution have a large dynamic range. We also provide a uniform recoverability condition in Theorem \ref{T5} for the $\ell_1/\ell_2$ minimization problem. Our final theoretical contribution is a robustness result in Theorem \ref{T6} that can be used to provide stability for noisy $\ell_1/\ell_2$ minimization problems, see Corollary \ref{cor:C6}. We have also proposed a new type of initialization for this nonconvex optimization problem called \emph{support selection} that empirically improves the recovery rate for $\ell_1/\ell_2$ minimization. Investigations that give a better theoretical understanding of why large dynamic range improves this type of minimization, along with additional analysis to better quantify stability in noisy cases, will be the subject of future research.

Although our analysis in this article arrives in similar recoverability and stability conditions analogous to the ones given by $\ell_1$, it does not give anything better. This may be due to the fact that the inequalities originally sharp for $\ell_1$ become less optimal when additional division steps are taken in the estimates. Also, norm ratios are more of objectives promoting the compressibility of a signal rather than the sparsity defined by $\ell_0$, which is highly discontinuous. Whereas in many practical problems, a sparse signal comes from the approximation of a compressible signal. This suggests that $\ell_1/\ell_2$ itself could be an alternative objective in terms of defining the goal of compressed sensing. More theoretical work in this direction is also worth exploring in the future. 

\section*{Conflict of interest}

There is none. 

\section*{Acknowledgements}
We would like to thank the anonymous referees for their very helpful comments which significantly improve the presentation of the paper. 
The first author (yxu@utah.math.edu) thanks for useful discussions with Tom Alberts, You-Cheng Chou and Dong Wang.  The first and second authors (yxu@math.utah.edu, akil@sci.utah.edu) acknowledge partial support from NSF DMS-1848508.
The third author (tranha@ornl.gov) acknowledges support from Scientific Discovery through Advanced Computing (SciDAC) program through the FASTMath Institute under Contract No. DE-AC02-05CH11231.
The last author (cwebst13@utk.edu) acknowledges the U.S. Department of Energy, Office of Science, Early Career Research Program under award number ERKJ314; U.S. Department of Energy, Office of Advanced Scientific Computing Research under award numbers ERKJ331 and ERKJ345; and the National Science Foundation, Division of Mathematical Sciences, Computational Mathematics program under contract number DMS1620280.

\printbibliography
  
\end{document}